\documentclass[10pt, oneside]{article}

\usepackage[dvips]{graphicx}
\usepackage{pstricks}
\usepackage{color}

\oddsidemargin = 10pt\textwidth = 430pt \textheight = 620pt

\usepackage{amssymb}
\usepackage{amsmath}
\usepackage{amsthm}
\usepackage{color}

\newtheorem{theorem}{Theorem}[section]

\newtheorem{lemma}[theorem]{Lemma}
\newtheorem{proposition}[theorem]{Proposition}

\theoremstyle{definition}
\newtheorem{definition}{Definition}[section]

\theoremstyle{remark}



\newcommand{\RR}{\mathbb{R}}
\newcommand{\NN}{\mathbb{N}}

\newcommand{\XX}{\mathbb{X}}

\newcommand{\cF}{\mathcal{F}}

\newcommand{\cL}{\mathcal{L}}
\newcommand{\cS}{\mathcal{S}}

\newcommand{\cZ}{\mathcal{Z}}

\newcommand{\e}{\varepsilon}

\begin{document}
\title{Multiplicity of bounded solutions to the $k$-Hessian equation with a Matukuma-type source\thanks{The first author was supported by JSPS KAKENHI Grant Number 16K05225. The second and the third author were partially supported by Fondecyt Grant 1150230.}}

\author{Y. Miyamoto, J. S\'anchez  and V. Vergara}
\date{}
\maketitle

\begin{abstract}
The aim of this paper is to deal with the $k$-Hessian counterpart of the Laplace equation involving a nonlinearity studied by Matukuma. Namely, our model is the problem
\begin{equation*}
(1)\;\;\;\begin{cases}
S_k(D^2u)= \lambda  \frac{|x|^{\mu-2}}{(1+|x|^2)^{\frac{\mu}{2}}} (1-u)^q &\mbox{in }\;\; B,\\
u <0 & \mbox{in }\;\; B,\\
u=0 &\mbox{on }\partial B,
\end{cases}
\end{equation*}
where $B$ denotes the unit ball in $\RR^n,n>2k$ ($k\in\NN$), $\lambda>0$ is an additional parameter, $q>k$ and $\mu\geq 2$. In this setting, through a transformation recently introduced by two of the authors that reduces problem (1) to a non-autonomous two-dimensional generalized Lotka-Volterra system, we prove the existence and multiplicity of solutions for the above problem combining dynamical-systems tools, the intersection number between a regular and a singular solution and the super and subsolution method.
\end{abstract}
2010 Mathematics Subject Classification: primary 35B33; secondary 34C37, 34C20, 35J62, 70K05.

\noindent {\em Keywords:}\, $k$-Hessian operator; Radial solutions; Non-autonomous Lotka-Volterra system; Phase analysis; Critical exponents; 
Singular solution; Intersection number.
{\footnotesize}

\section{Introduction}
The classical Matukuma equation
\begin{equation*}
\Delta u+\frac{u^q}{1+|x|^2}=0\;\; \mbox{in}\;\; \RR^3,
\end{equation*}
was proposed by T. Matukuma \cite{Matukuma30} as a mathematical model for a globular cluster of stars, where $q>1$ is a parameter and $u>0$ stands for the gravitational potential. This equation has been extensively studied in the literature, see e.g. \cite{SaQu10, WaZL12, BFaHo86, FeQT09, Li93, MoYY05}.

A more general model was proposed by J. Batt, W. Faltenbacher and E. Horst \cite{BFaHo86} which contains as a particular case the equation
\begin{equation*}
\Delta u+\frac{|x|^{\mu-2}}{(1+|x|^2)^\frac{\mu}{2}}u^q=0\;\; \mbox{in}\;\; \RR^3,
\end{equation*}
where $\mu>0$ is an additional parameter, see e.g. \cite{BFaHo86, Li93} and the references therein. Recently, an extensive study of solutions to the preceding equation has been extended to higher $(n>3)$ dimensions, see \cite{WaZhLi12}. It is well-known that this kind of equation admits three different types of positive radial solutions depending on the parameters $\mu$, $q$ and $n$ (in case $n>3$). In particular, they admit the so-called $E$-solutions which are characterized by $\lim_{r\to 0}u(r)<\infty$, see \cite{BatLi10, WaZhLi12}.

\medbreak

The aim of this paper is to study radially symmetric bounded solutions to the Matukuma equation in the framework of the $k$-Hessian operator. More precisely, we consider the question of the existence and multiplicity of radially symmetric bounded solutions of the problem
\begin{equation}\label{Eq:Ma:0}
\begin{cases}
S_k(D^2u)= \lambda  \frac{|x|^{\mu-2}}{(1+|x|^2)^{\frac{\mu}{2}}} (1-u)^q &\mbox{in }\;\; \Omega,\\
u <0 & \mbox{in }\;\; \Omega,\\
u=0 &\mbox{on }\partial \Omega,
\end{cases}
\end{equation}
where $\lambda>0$ is an additional parameter, $q>k$, $\mu\geq 2$, and $\Omega$ is a suitable bounded domain in $\RR^n$. We point out that answers to the above questions raised for \eqref{Eq:Ma:0} are, to our knowledge, unknown in the literature. We handle the existence and multiplicity of radially symmetric bounded solutions combining dynamical-systems tools with the approach of the intersection number between a regular solution and a singular solution. To this end, we first use a new transformation recently introduced in \cite{SaVe17}, which reduces the radial version of \eqref{Eq:Ma:0} (denoted $(P_{\lambda})$) to a two-dimensional non-autonomous Lotka-Volterra system (denoted $(MS_{q,\mu})$). This non-autonomous system can be considered as an asymptotically autonomous system in the sense of Thieme \cite{Thie94}. Thus we focus on the corresponding limiting systems, particularly in case $t\to -\infty$ (denoted $(LVS_{q,\rho_-})$), which allows us to obtain two relevant exponents for system $(MS_{q,\mu})$: They are the Tso and Joseph-Lundgren type exponents. It is worth to mention that system $(LVS_{q,\rho_-})$ matches up with an autonomous Lotka-Volterra system obtained for studying problem \eqref{Eq:Ma:0} with a power weight on the right hand side equal to $|x|^{\mu -2}$. See \cite{SaVe17} for more details.

The flow of system $(MS_{q,\mu})$ is analyzed from the corresponding limiting systems; two critical points (denoted $(\hat x, \hat y)$ and $P_3(n-2+\mu , 0)$) of $(MS_{q,\mu})$, which also are critical points of $(LVS_{q,\rho_-})$, are the key to obtaining a singular solution and a bounded solution for problem $(P_{\lambda})$. More precisely, we show that the orbits of $(MS_{q,\mu})$ starting at the critical point $P_3(n-2+\mu , 0)$ are characterized by the existence of bounded solutions to problem $(P_{\lambda})$ (see Proposition \ref{Prop:Equiv} below). On the other hand, the orbits of $(MS_{q,\mu})$ starting at the critical point $(\hat x, \hat y)$ yield with existence of a singular solution to $(P_{\lambda})$ for some $\lambda>0$ (denoted $\tilde{\lambda}$). This new parameter $\tilde{\lambda}$ is essential to obtaining the multiplicity of radially symmetric bounded solutions to \eqref{Eq:Ma:0}.

A general existence result of solutions to \eqref{Eq:Ma:0} is obtained basically by the super and subsolutions method. See Section \ref {Sec:Exist}.

\medbreak

The paper is organized as follows. In Section 2 we briefly describe the $k$-Hessian operator and introduce some basic definitions. Theorems \ref{Exist} and \ref{S4L4}, which are our main results, are established in this section. In Section 3 we prove a general existence result of classical solutions of \eqref{Eq:Ma:0} (see Lemma \ref{max:sol}) and we use this result to prove Theorem \ref{Exist}. In Section 4 we obtain a proper non-autonomous Lotka-Volterra System from which we construct a singular solution, using the contraction mapping theorem. In Section 5 we study the intersection number between a regular solution and a singular solution of suitable equations. Finally, in Section 6 we prove Theorem \ref{S4L4}.

\medbreak

\section{Preliminaries and main results}

The $k$-Hessian operator $S_k$ is defined as follows. Let $k\in\NN$ and let $\Omega$ be a suitable bounded domain in $\RR^N$. Let $u\in C^2(\Omega)$, $1\leq k\leq n$, and let $\Lambda=(\lambda_1,\lambda_2,...,\lambda_n)$ be the eigenvalues of the Hessian matrix $(D^2u)$. Then the $k$-Hessian operator is given by the formula
\[
S_k(D^2u)=P_k(\Lambda)=\sum_{1\leq i_1<...<i_k\leq n}\lambda_{i_1}...\lambda_{i_k},
\]
where $P_k(\Lambda)$ is the $k$-th elementary symmetric polynomial in the eigenvalues $\Lambda$. This operator has a long history, see e.g. \cite{CaNS85, ChWa01, Trudinger95, Tso89, Tso90, Wang94, Wang09} and the references therein. Note that they include the usual Laplace operator ($k=1$). Recently, this class of operators has attracted renewed interest, see e.g. \cite{Bran13, DeGa14, Gavitone09, Gavitone10, NaTa15, SaVe16, WaBa13, WaXu14, Wei16, Wei17}.

\medbreak

Let $\Omega=B$ be the unit ball in $\RR^n$, which is an admissible domain for $S_k$. Then the $k$-Hessian operator when acting on radially symmetric functions can be written as $S_k(D^2u)=c_{n,k}\,r^{1-n}\left(r^{n-k}(u')^k \right)'$, where $r=|x|,\,x\in\RR^n$ and $c_{n,k}$ is defined by $c_{n,k}=\binom{n}{k}/n$.

Thus we can write \eqref{Eq:Ma:0} in radial coordinates, i.e.,
\begin{equation*}
(P_{\lambda})\qquad
\begin{cases}
c_{n,k}r^{1-n}\left(r^{n-k}(u')^k \right)' = \lambda \frac{r^{\mu-2}}{(1+r^2)^{\frac{\mu}{2}}} (1-u)^q, &  0<r<1,\\
u(r)  < 0, &  0\leq r<1,\\
u'(0)  =0,\, u(1)=0. &
\end{cases}
\end{equation*}

We introduce the space of functions $\Phi_0^k$ defined on $(0,1)$ for problem $(P_{\lambda})$:
\[
\Phi_0^k=\{u\in C^2((0,1))\cap C^1([0,1]): \left(r^{n-i}(u')^i \right)'\geq 0\;\;\mbox{in}\;\;(0,1) ,\, i=1,...,k,\,u'(0)=u(1)=0\}.
\]
Note that the functions in $\Phi_0^k$ are non-positive on $[0,1]$. However, if $\left(r^{n-i}(u')^i \right)'> 0$ for every $i=1,\ldots, k$, then every function in $\Phi_0^k$ is negative and strictly increasing on $(0,1)$. This in turn implies, as we are looking for solutions of ($P_\lambda$) in $\Phi_0^k$, that the parameter $\lambda$ must be positive.

\begin{definition}
Let $\lambda>0$. We say that a function $u \in C([0,1])$ is:
\begin{itemize}
\item[(i)] a {\it classical solution} of ($P_\lambda$) if $u\in \Phi_0^k$ and the first equality in ($P_\lambda$) holds;
\item[(ii)] an {\it integral solution} of ($P_\lambda$) if $u$ is absolutely continuous on $(0,1]$, $u(1)=0$, $\int_0^1 r^{n-k}(u'(r))^{k+1} dr <\infty$ and the equality
\[
c_{n,k}r^{n-k}(u'(r))^k = \lambda\int_0^r s^{n-1} \frac{s^{\mu-2}}{(1+s^2)^{\frac{\mu}{2}}}(1-u(s))^qds,\, \, \text{ a.e. } r\in (0,1),
\]
holds whenever the integral exists.
\end{itemize}
\end{definition}

The concept of integral solution was introduced in \cite{ClFM96} for a more general class of radial operators, see e.g. \cite{ClFM96} and the references therein.  The standard concept of weak solution is equivalent in this case to the notion of integral solution, see \cite[Proposition 2.1]{ClFM96}.

\medbreak

We recall the version of the method of super and subsolutions for \eqref{Eq:Ma:0}, see \cite[Theorem 3.3]{Wang94} for more details.
\begin{definition}
A function $u\in\Phi^k(B):=\{u\in C^2(B)\cap C(\overline{B}): S_{i}(D^2 u)\geq 0\;\;\mbox{in}\;B,\, i=1,...,k\}$ is called a {\it subsolution} (resp. {\it supersolution}) of \eqref{Eq:Ma:0} if
\begin{equation*}
\begin{cases}
S_k(D^2u)\geq (\mbox{resp.}\leq)& \lambda  \frac{|x|^{\mu-2}}{(1+|x|^2)^{\frac{\mu}{2}}} (1-u)^q \;\;\mbox{in }\;\; B,\\
u\leq (\mbox{resp.}\geq)\;\; 0&\qquad\qquad\;\,\mbox{on }\; \partial B.
\end{cases}
\end{equation*}
\end{definition}
Note that the trivial function $u\equiv 0$ is always a supersolution.

The following concept is needed to establish a general result on the existence of solutions to problem \eqref{Eq:Ma:0}.
\begin{definition}
We say that a function $v$ is a {\it maximal} solution of \eqref{Eq:Ma:0} if $v$ is a solution of \eqref{Eq:Ma:0} and, for each subsolution $u$ of \eqref{Eq:Ma:0}, we have $u\leq v$.
\end{definition}
This notion of maximal solution was recently introduced in \cite{SaVe16} to prove existence results, see also \cite{SaVe17}.
\medbreak

Now we state our first main result concerning the existence and non-existence of solutions to problem $(P_\lambda)$.
\begin{theorem}\label{Exist}
Let $n > 2k$, $q > k$ and $\mu\geq 2$. There exists $\lambda^*>0$ such that problem $(P_\lambda)$ admits a maximal bounded solution for $\lambda\in (0,\lambda^*)$, at least one possibly unbounded integral solution for $\lambda=\lambda^*$ and no classical solutions for all $\lambda>\lambda^*$. Additionally, 
\[
\lambda^* \geq d(\mu)\binom{n}{k}\left(\frac{2k}{q-k}\right)^k\left(\frac{q-k}{q}\right)^q,
\]
where the positive constant $d(\mu)$ is given by
\[
d(\mu) =
\begin{cases}
1\;\;& \mbox{if }\;\;\mu=2,\\
\frac{(\frac{\mu}{2})^\frac{\mu}{2}}{(\frac{\mu-2}{2})^\frac{\mu-2}{2}}\;\;& \mbox{if }\;\;2< \mu\leq 4,\\
2^{\frac{\mu}{2}}\;\;& \mbox{if }\;\;\mu>4.
\end{cases}
\]
\end{theorem}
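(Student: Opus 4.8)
The plan is to realise $\lambda^*$ as the supremum of $\Lambda=\{\lambda>0:(P_\lambda)\text{ has a classical solution}\}$ and to run the super and subsolution machinery (cf. \cite{Wang94} and \lemref{max:sol}) around the trivial supersolution $u\equiv0$; throughout write $w(r)=r^{\mu-2}(1+r^2)^{-\mu/2}$ for the Matukuma weight. First I would record that $\Lambda$ is an interval. If $\lambda_0\in\Lambda$ with solution $u_{\lambda_0}$ and $0<\lambda<\lambda_0$, then, since $w(r)(1-u_{\lambda_0})^q>0$, one has $S_k(D^2u_{\lambda_0})=\lambda_0 w(1-u_{\lambda_0})^q\ge \lambda w(1-u_{\lambda_0})^q$, so $u_{\lambda_0}$ is a subsolution of $(P_\lambda)$ lying below the supersolution $0$; \lemref{max:sol} then produces a maximal solution $u_\lambda$ with $u_{\lambda_0}\le u_\lambda\le0$. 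Hence $\Lambda=(0,\lambda^*)$ or $(0,\lambda^*]$ and a maximal bounded solution exists for every $\lambda\in(0,\lambda^*)$.

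The quantitative lower bound is the heart of the matter, and here I would exhibit an explicit subsolution. Put $\underline u_A(r)=A(r^2-1)$ with $A>0$; then $D^2\underline u_A=2A\,\mathrm{Id}$, so $\underline u_A\in\Phi^k(B)$, $\underline u_A\le 0$ with $\underline u_A|_{\partial B}=0$, and a direct computation gives the constant value $S_k(D^2\underline u_A)=\binom{n}{k}(2A)^k$. To verify the subsolution inequality it suffices to dominate the right-hand side. An elementary calculus exercise on $w$ shows $\max_{[0,1]}w=1/d(\mu)$, the three cases $\mu=2$, $2<\mu\le4$, $\mu>4$ corresponding respectively to the maximiser $r=0$, the interior critical point $r^2=(\mu-2)/2$, and the endpoint $r=1$; moreover $(1-\underline u_A)^q=(1+A(1-r^2))^q\le(1+A)^q$ on $[0,1]$. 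Therefore $\underline u_A$ is a subsolution of $(P_\lambda)$ as soon as $\binom{n}{k}(2A)^k\ge \lambda d(\mu)^{-1}(1+A)^q$, that is, whenever $\lambda\le d(\mu)\binom{n}{k}(2A)^k(1+A)^{-q}$. Optimising the right-hand side over $A>0$ gives $A=k/(q-k)$, for which $(2A)^k(1+A)^{-q}=(2k/(q-k))^k((q-k)/q)^q$; with this choice $\underline u_A$ is a subsolution below the supersolution $0$, so \lemref{max:sol} applies and yields $\lambda^*\ge d(\mu)\binom{n}{k}(2k/(q-k))^k((q-k)/q)^q$, which is precisely the claimed estimate.

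It remains to show $\lambda^*<\infty$ and to treat the endpoint. For finiteness I would compare with the pure power weight: since $(1+r^2)^{\mu/2}\le 2^{\mu/2}$ on $(0,1)$, any solution $u$ of $(P_\lambda)$ satisfies $S_k(D^2u)=\lambda w(1-u)^q\ge 2^{-\mu/2}\lambda\,r^{\mu-2}(1-u)^q$, so $u$ is a subsolution of the analogous problem with the pure power weight $r^{\mu-2}$ and parameter $2^{-\mu/2}\lambda$; combined with the supersolution $0$ this forces that problem to be solvable, whence $\lambda^*\le 2^{\mu/2}\tilde\lambda^*$, where $\tilde\lambda^*<\infty$ is the corresponding threshold from the analysis of \cite{SaVe16,SaVe17}. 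Non-existence for $\lambda>\lambda^*$ is then immediate from the interval structure.

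Finally, for $\lambda=\lambda^*$ I would pass to the limit along an increasing sequence $\lambda_j\uparrow\lambda^*$ of maximal solutions $u_{\lambda_j}$, which by the comparison above form a family that is monotone in magnitude with $u_{\lambda_j}(1)=0$. The pointwise monotone limit $u^*:[0,1]\to[-\infty,0]$ is the candidate integral solution. The main obstacle I anticipate is exactly this step: one must derive uniform a priori bounds on $\int_0^1 r^{n-k}(u_{\lambda_j}')^{k+1}\,dr$ in order to pass to the limit in the integral identity of the definition, to guarantee $u^*\not\equiv-\infty$, and to establish that $u^*$ is an integral solution (possibly unbounded at the origin) satisfying the required finite-energy condition. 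Controlling the borderline integrability near $r=0$, where $u^*$ may become unbounded, is the delicate point; it would be handled by extracting the energy bound directly from the monotone family $\{u_{\lambda_j}\}$ together with the integral identity and then invoking a Fatou/monotone-convergence passage to the limit.
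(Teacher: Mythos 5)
Your proposal tracks the paper's own proof in its essentials: $\lambda^*$ is defined as the supremum of the solvable parameters, Lemma~\ref{max:sol} gives the maximal bounded solution on $(0,\lambda^*)$, and your lower bound uses the very same subsolution as the paper — your $\underline u_A$ with the optimizing choice $A=k/(q-k)$ is exactly the paper's $v(x)=\frac{k}{q-k}(|x|^2-1)$, and your three-case evaluation of $\max_{[0,1]}w=1/d(\mu)$ (maximizer at $r=0$, at $r^2=(\mu-2)/2$, at $r=1$) reproduces the paper's constant $C=d(\mu)^{-1}$. One small slip of bookkeeping: a subsolution by itself does not let you ``apply Lemma~\ref{max:sol}'', since that lemma takes as hypothesis a classical \emph{solution} at some $\lambda_0$; what is needed there (and what the paper invokes) is first the super/subsolution theorem of \cite{Wang94} to produce a solution between $\underline u_A$ and $0$, after which Lemma~\ref{max:sol} supplies maximality for all smaller $\lambda$.

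Two points differ in substance. For the finiteness of $\lambda^*$ you dominate the weight by $2^{-\mu/2}r^{\mu-2}$ and appeal to the finite threshold of the pure power-weight problem from \cite{SaVe16,SaVe17}; the comparison itself is sound, but it outsources the key statement, whereas the paper proves finiteness self-containedly: by the inequality $\Delta u\ge C(n,k)[S_k(D^2u)]^{1/k}$ any solution of $(P_\lambda)$ is a supersolution of the linear weighted eigenvalue problem $(E_m)$ with $m=h^{1/k}$, and testing against the first eigenfunction $\phi_{1,m}$ yields $\lambda\le(\lambda_{1,m}/M)^k$. The real gap is at $\lambda=\lambda^*$: you stop at a sketch, asserting that one \emph{must} first derive uniform bounds on $\int_0^1 r^{n-k}(u_{\lambda_j}')^{k+1}\,dr$ before passing to the limit, and you never carry this out. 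The paper needs no such a priori estimate: each maximal solution satisfies the explicit representation
\[
u_{\lambda_i}(r)=-\int_r^1\left[c_{n,k}^{-1}\tau^{k-n}\int_0^\tau s^{n-1}\tfrac{s^{\mu-2}}{(1+s^2)^{\mu/2}}\,\lambda_i\,(1-u_{\lambda_i}(s))^q\,ds\right]^{1/k}d\tau,
\]
and since the $u_{\lambda_i}$ decrease as $\lambda_i\uparrow\lambda^*$, the monotone convergence theorem applied twice passes this identity to the limit, producing the (possibly unbounded) integral solution at $\lambda^*$ directly from monotonicity plus the integral representation. (The paper, for its part, is content with establishing this integral identity and does not belabor the remaining conditions in the definition of integral solution, so your worry is not unreasonable — but the limit passage you flag as the obstacle is, in the paper's argument, the easy part, and your proposal leaves it unfinished.)
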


Next, in order to state our second main result we introduce two relevant exponents. Let $\sigma\geq 0$. From now on we shall denote by  
\[
q^*(k,\sigma)=\frac{(n+2)k+\sigma(k+1)}{n-2k}
\] 
and
\begin{equation*}\label{Exp:critical:Intro}
q_{JL}(k,\sigma):=
\begin{cases}
k\frac{k(k+1)n-k^2(2-\sigma)+2k+\sigma-2\sqrt{k(2k+\sigma)[(k+1)n-k(2-\sigma)]}}{k(k+1)n-2k^2(k+3)-2k\sigma-2\sqrt{k(2k+\sigma)[(k+1)n-k(2-\sigma)]}}, & n>2k+8+\frac{4\sigma}{k},\\
\infty, & 2k < n \leq 2k+8+\frac{4\sigma}{k},
\end{cases}
\end{equation*}

\noindent the Tso and Joseph-Lundgren type exponents, respectively.

The generalized Joseph-Lundgren exponent, $q_{JL}(k,\sigma)$, was recently obtained in \cite{SaVe17} in connection with the multiplicity of radial bounded solutions of a $k$-Hessian equation involving a weight of the form $|x|^{\sigma}$. We point out that, for $k = 1$ and $\sigma=0$, $q_{JL}(1,0)$ coincides with the classical Joseph-Lundgren exponent \cite{JoLu73}. When $k>1$ and $\sigma=0$, this exponent also appears in a large class of problems with nonlinear radial operators including the usual Laplace, $p$-Laplace and $k$-Hessian operators \cite{Miya16, MiTa17}. See also \cite{SaVe16} for the case of the $k$-Hessian operator.
   
Now we state our second main result.
\begin{theorem}\label{S4L4}
Let $n > 2k$, $q > k$ and $\mu\geq 2$. Assume that $q^*(k,\mu-2)< q < q_{JL}(k,\mu-2)$. Then there exists a positive constant $\tilde{\lambda} < \lambda ^*$ such that for each $N\ge 1$, there is an $\e>0$ such that if $|\lambda-\tilde{\lambda}|<\e$, then $(P_{\lambda})$ has at least $N$ solutions.
In particular, if $\lambda=\tilde{\lambda}$, then $(P_{\lambda})$ has infinitely many solutions.
\end{theorem}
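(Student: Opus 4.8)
The plan is to exploit the dichotomy between the two critical points of $(MS_{q,\mu})$ and to convert the spiralling of orbits near $(\hat x, \hat y)$ into an unbounded intersection number, following a shooting-and-comparison strategy. First I would fix the singular solution $U$ constructed in Section 4, whose orbit emanates from $(\hat x, \hat y)$ and which solves $(P_{\tilde\lambda})$, and then consider the one-parameter family of bounded solutions associated, via Proposition \ref{Prop:Equiv}, with the orbits issuing from $P_3(n-2+\mu,0)$, parametrised by $\lambda\in(0,\lambda^*)$. The entire argument hinges on the local phase portrait at $(\hat x, \hat y)$.

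The decisive step is to linearise the limiting autonomous system $(LVS_{q,\rho_-})$ at $(\hat x, \hat y)$ and to show that the hypothesis $q^*(k,\mu-2)<q<q_{JL}(k,\mu-2)$ is exactly the condition under which the eigenvalues of the linearisation form a complex-conjugate pair. Indeed, the Joseph--Lundgren-type exponent $q_{JL}(k,\mu-2)$ is defined so that the discriminant of the characteristic polynomial vanishes at $q=q_{JL}$: for $q<q_{JL}$ the discriminant is negative, forcing complex eigenvalues, while $q>q^*$ guarantees that $(\hat x,\hat y)$ lies in the admissible region and that its unstable orbit yields a genuine singular solution. Since $(MS_{q,\mu})$ is asymptotically autonomous in the sense of Thieme with limiting system $(LVS_{q,\rho_-})$ as $t\to-\infty$, the complex eigenvalues make every orbit approaching $(\hat x,\hat y)$ wind around it infinitely often as $r\to 0$.

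From this spiralling I would deduce that the bounded solutions $u_\lambda$ and the singular solution $U$ oscillate about each other near the origin, so that the intersection number $\mathcal{Z}_{(0,1)}(u_\lambda-U)$ can be made as large as desired by taking $\lambda$ close to $\tilde\lambda$. Using the Sturm-type comparison and the monotonicity properties of the intersection number established in Section 5, I would translate each additional half-rotation into an additional zero of $u_\lambda-U$, and, via the boundary condition $u(1)=0$ together with an intermediate-value argument, into an additional solution of $(P_\lambda)$. Concretely, this shows that the bifurcation branch $\lambda\mapsto u_\lambda$ oscillates around the vertical line $\lambda=\tilde\lambda$, with turning points accumulating at $\tilde\lambda$; hence for every $N$ there is $\varepsilon>0$ such that $|\lambda-\tilde\lambda|<\varepsilon$ yields at least $N$ solutions, and $\lambda=\tilde\lambda$ yields infinitely many.

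The main obstacle, I expect, is making the spiralling rigorous for the non-autonomous system: one only has an asymptotically autonomous system, so the rotation number near $(\hat x,\hat y)$ must be shown to diverge for the true flow of $(MS_{q,\mu})$ and not merely for its limit $(LVS_{q,\rho_-})$. This requires a uniform control of the perturbation $(MS_{q,\mu})-(LVS_{q,\rho_-})$ as $t\to-\infty$, together with a careful verification that the crossings of $u_\lambda$ with $U$ are transversal and correspond to distinct solutions rather than to spurious or tangential intersections.
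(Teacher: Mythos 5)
Your proposal has the right global picture (the spiral at $(\hat x,\hat y)$, a singular solution crossing regular ones, intersection number turning into multiplicity), but the core mechanism you propose is wrong in a way that matters, and the step you yourself flag as the ``main obstacle'' is precisely the step the paper replaces by a different idea. First, it is not true that orbits of the non-autonomous system $(MS_{q,\mu})$ wind around $(\hat x,\hat y)$ infinitely often as $r\to 0$: as $t\to-\infty$ the orbits of regular solutions converge to $P_3(n-2+\mu,0)$ (Proposition \ref{Prop:Equiv}), and the unique orbit converging to $(\hat x,\hat y)$ --- the singular one of Lemma \ref{S3L1} --- approaches it at the rate $O(e^{2t})$ forced by the perturbation term $\mu e^{2t}/(1+e^{2t})$, with no infinite winding; backward-in-time spiralling into a \emph{stable} spiral point does not occur. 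The infinite winding lives only in the autonomous limit system $(LVS_{q,\rho_-})$, as $t\to+\infty$, i.e.\ as $r\to\infty$ for the Emden--Fowler problem \eqref{S4E1}. The paper transfers it to small $r$ in the original problem not by ``uniform control of the perturbation'' but by a self-similar blow-up in the shooting parameter: with $\gamma=(q-k)/(2k+\mu-2)$, the rescalings $(\cF_\alpha\tilde w)(r)=\alpha^{-1}\tilde w(r/\alpha^\gamma)$ and $(\cF_\alpha w)(r,\alpha)=\alpha^{-1}w(r/\alpha^\gamma,\alpha)$ converge in $C_{loc}$ to the singular and regular solutions $\tilde U$, $U$ of \eqref{S4E1} (Lemmas \ref{S4L1}, \ref{S4L2}); Proposition \ref{S4P1} gives $\cZ_{(0,\infty)}[\tilde U-U]=\infty$, the zeros are simple, and hence $\cZ_{[0,1]}[\tilde w-w(\cdot,\alpha)]\to\infty$ as $\alpha\to\infty$ (Lemma \ref{S4L3}). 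Note the divergence is in $\alpha$, the height of the solution at the origin, \emph{not} in $\lambda\to\tilde\lambda$ as you claim: for $\lambda$ near $\tilde\lambda$ there coexist solutions with few and with many intersections, which is exactly why the theorem is true.

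Second, your counting step cannot be run as stated because the ``branch $\lambda\mapsto u_\lambda$'' is not well defined --- non-uniqueness for fixed $\lambda$ is the conclusion, not a tool. The paper's substitute is the scaling structure of the pure-power nonlinearity: $w\mapsto(\tilde\lambda/\lambda)^{1/(q-k)}w$ maps solutions of the IVP with parameter $\tilde\lambda$ to solutions with parameter $\lambda$, so solvability of $(P_\lambda)$ reduces to the one-dimensional bifurcation equation $\lambda=\tilde\lambda(-w(1,\alpha))^{q-k}$ in the single variable $\alpha$. The multiplicity then follows from a topological argument your sketch lacks: zeros of $\tilde w(\cdot)-w(\cdot,\alpha)$ are simple, depend continuously on $\alpha$, and cannot enter or leave through $r=0$ (since $\tilde w(0)-w(0,\alpha)=+\infty$), so each unit jump of the intersection number forces a zero to cross $r=1$, i.e.\ $w(1,\alpha)=-1$; a parity argument (odd intersection number forces $w(1,\alpha)<-1$, even forces $w(1,\alpha)>-1$) shows $w(1,\alpha)$ oscillates around $-1$ infinitely often as $\alpha\to\infty$. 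This yields infinitely many solutions of the bifurcation equation at $\lambda=\tilde\lambda$, at least $N$ solutions for $|\lambda-\tilde\lambda|<\e$, and, as a by-product, $\tilde\lambda<\lambda^*$. Without the rescaling lemmas and the bifurcation equation, your intermediate-value argument has no function of one variable to apply itself to.
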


It is remarkable that the exponents $q^*(k,\mu-2)$ and $q_{JL}(k,\mu-2)$ have the same role for problem $(P_{\lambda})$ with different types of weights on the right hand side, either $\frac{r^{\mu-2}}{(1+r^2)^\frac{\mu}{2}}$ or $r^{\mu-2}$, see \cite[Theorem 3.1 (I)]{SaVe17} for the last case. On the other hand, even though the corresponding parameter $\tilde{\lambda}$ also plays the same role in both cases, this value of the parameter $\lambda$ is different in each case. See Lemma \ref{S3L2} below for the definition of $\tilde{\lambda}$, and see \cite[Theorem 3.1 (I)]{SaVe17} for its definition in case of $r^{\mu-2}$. In Section \ref{InterNumber} we discuss further these relationships. 

\section{Existence and non-existence of solutions of problem $(P_\lambda)$}\label{Sec:Exist}
In this section we prove a general existence result of classical solutions of problem $(P_\lambda)$.
We begin with 
\begin{lemma}\label{max:sol}
Let $n > 2k$, $q > k$, $\mu\geq 2$ and $\lambda_0 >0$. Assume that there exists a classical solution of
\begin{equation}
\begin{cases}\label{Eq:2:0}
c_{n,k}r^{1-n}\left(r^{n-k}(w')^k \right)' = \lambda_0 \frac{
r^{\mu-2}}{(1+r^2)^\frac{\mu}{2}} (1-w)^q\,,\quad 0<r<1,\\
w  < 0 \,, \hspace{4.65cm} 0\leq r<1,\\
w'(0)  =0,\, w(1)=0. &
\end{cases}
\end{equation}

Then, for every $\lambda\in (0,\lambda_0)$, problem $(P_\lambda)$ has a classical maximal bounded solution. Moreover, the classical maximal bounded solutions form a decreasing sequence as $\lambda$ increases.
\end{lemma}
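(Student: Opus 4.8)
The plan is to run the classical method of super and subsolutions, taking the prescribed solution $w$ of \eqref{Eq:2:0} as a subsolution and the trivial function as a supersolution, and then to build the maximal solution by a monotone iteration started from the top. First I would check that the two barriers are correctly ordered. Since $w<0$ on $[0,1)$ we have $1-w>1>0$, so the function $f(r):=\frac{r^{\mu-2}}{(1+r^2)^{\frac{\mu}{2}}}\ge 0$ makes the right-hand side $f(r)(1-w)^q$ nonnegative; hence for every $\lambda\in(0,\lambda_0)$,
\[
c_{n,k}r^{1-n}\bigl(r^{n-k}(w')^k\bigr)'=\lambda_0\, f(r)(1-w)^q\ \ge\ \lambda\, f(r)(1-w)^q ,
\]
so $w$ is a subsolution of $(P_\lambda)$, while $u\equiv 0$ is a supersolution as already noted, and clearly $w\le 0$ on $[0,1]$.

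Next I would set up the monotone scheme. Put $u^{(0)}\equiv 0$ and, given $u^{(m)}\in\Phi_0^k$, let $u^{(m+1)}$ solve the $k$-Hessian Dirichlet problem $S_k(D^2u^{(m+1)})=\lambda f(r)(1-u^{(m)})^q$ in $B$ with $u^{(m+1)}=0$ on $\partial B$; since the data are radial and nonnegative, each iterate lies in $\Phi_0^k$. Using the comparison principle for $S_k$ (if $S_k(D^2a)\ge S_k(D^2b)$ with $a=b$ on $\partial B$, then $a\le b$) together with the fact that $(1-u)^q$ is decreasing in $u$, an induction shows that the sequence is monotone nonincreasing, $0=u^{(0)}\ge u^{(1)}\ge\cdots$, and that it is bounded below by $w$: indeed, if $u^{(m)}\ge w$ then $(1-u^{(m)})^q\le(1-w)^q$, so $S_k(D^2u^{(m+1)})\le S_k(D^2w)$ and hence $u^{(m+1)}\ge w$. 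Thus $w\le u^{(m+1)}\le u^{(m)}\le 0$ for all $m$.

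Passing to the limit, $u_\lambda:=\lim_m u^{(m)}$ exists pointwise; interior elliptic estimates and the stability of the radial $k$-Hessian Dirichlet problem under monotone convergence give $u_\lambda\in\Phi_0^k$ as a classical bounded solution of $(P_\lambda)$. Maximality then follows from the same comparison argument applied to an arbitrary subsolution $\tilde u$ of \eqref{Eq:Ma:0}: since $\tilde u\le 0=u^{(0)}$, and $\tilde u\le u^{(m)}$ forces $S_k(D^2\tilde u)\ge \lambda f(1-\tilde u)^q\ge\lambda f(1-u^{(m)})^q=S_k(D^2u^{(m+1)})$ and hence $\tilde u\le u^{(m+1)}$, we obtain $\tilde u\le u_\lambda$ in the limit. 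Finally, for $0<\lambda_1<\lambda_2<\lambda_0$ the maximal solution $u_{\lambda_2}$ satisfies $S_k(D^2u_{\lambda_2})=\lambda_2 f(1-u_{\lambda_2})^q\ge\lambda_1 f(1-u_{\lambda_2})^q$, so it is a subsolution of $(P_{\lambda_1})$; by maximality $u_{\lambda_2}\le u_{\lambda_1}$, which is precisely the asserted monotonicity in $\lambda$.

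The main obstacle I anticipate is justifying the limiting step rigorously inside the $k$-admissible class: namely, that every iterate is genuinely $k$-admissible so that $S_k$ remains (degenerate) elliptic along the scheme, and that the monotone limit is a \emph{classical} solution rather than merely an integral one. This rests on the comparison principle and on a priori regularity for the radial $k$-Hessian Dirichlet problem, for which I would lean on the super--subsolution framework of \cite{Wang94} and on the maximal-solution construction of \cite{SaVe16,SaVe17}.
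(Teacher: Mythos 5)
Your proof is correct, and its backbone is the same as the paper's: your iteration $u^{(0)}\equiv 0$, $S_k(D^2u^{(m+1)})=\lambda f(r)(1-u^{(m)})^q$, $u^{(m+1)}=0$ on $\partial B$, is exactly the paper's sequence $u_1,u_2,\dots$ (its $u_1$ solves problem $(Q)$, i.e.\ the first step of your scheme), maximality is obtained in both cases from the fact that the scheme does not depend on the subsolution, and your monotonicity-in-$\lambda$ argument is verbatim the paper's. Where you genuinely diverge is the subsolution step. The paper does not use $w$ itself as the lower barrier: it first transforms it, setting $\Phi=\tilde h^{-1}\circ h$ with $h(s)=\int_s^0 g(-t)^{-1}\,dt$, $g(t)=[\lambda_0(1+t)^q]^{1/k}$, shows via the convexity and boundedness of $\Phi$ (this is where $q>k$ enters, through \cite{SaVe16}) that $\Phi(w)$ is a \emph{bounded} subsolution of $(P_\lambda)$, and then invokes \cite[Theorem 3.3]{Wang94} to produce a solution $u$ with $\Phi(w)\le u\le 0$, which serves as the floor of the iteration. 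You instead observe that $w$ is already a subsolution of $(P_\lambda)$ because $\lambda<\lambda_0$, and use it directly as the floor. Under the lemma's stated hypothesis this is legitimate and simpler: a classical solution lies in $\Phi_0^k\subset C^1([0,1])$, hence is bounded, so your limit is automatically bounded, and you never actually need $q>k$ nor the intermediate appeal to the super/subsolution theorem. What the paper's $\Phi$-transformation buys is robustness: it manufactures a bounded subsolution even when the solution at $\lambda_0$ is unbounded, which is the mechanism used in \cite{SaVe16,SaVe17} where the datum may only be a singular or integral solution, and it explains why the lemma carries the hypothesis $q>k$ at all. Two small remarks: your ``nonincreasing'' is the correct monotonicity direction for this scheme (the paper calls the sequence increasing, which appears to be a slip, since a larger right-hand side pushes the $k$-admissible solution down); and the technical debt you flag at the end --- $k$-admissibility/regularity of the radial barrier at the origin and the fact that the monotone limit is classical rather than merely integral --- is shared equally by the paper, which also discharges it by citation rather than by argument.
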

\begin{proof}
Fix $\lambda\in (0,\lambda_0)$ and define the functions
\[
g(t) = \left[\lambda_0(1+t)^q\right]^{1/k}\, \text{ and }\;\, \tilde{g}(t) = \left[\lambda(1+t)^q\right]^{1/k},\text{ for all }\; t\geq 0.
\]
Set $\Phi(s) = \tilde{h}^{-1}(h(s))$ ($s\leq 0$) with $h$ and $\tilde{h}$ given by
\[
h(s) = \int_s^0 \frac{1}{g(-t)}\,dt\; \text{ and }\;\, \tilde{h}(s) = \int_s^0 \frac{1}{\tilde{g}(-t)}\,dt, \; s\leq 0.
\]
Since $q>k$, $\lim_{s\to -\infty}h(s)$ exists and hence $\Phi$ is bounded by \cite[Lemma 2.1 (i)-(ii)]{SaVe16}. Next, by \eqref{Eq:2:0} and the convexity of $\Phi$ \cite[Lemma 2.1 (iii)]{SaVe16}, we have
\begin{eqnarray*}
S_k(D^2 \Phi(w))&=&c_{n,k}kr^{1-k}(\Phi'(w)w')^{k-1}\left(\Phi''(w)(w')^2+\Phi'(w)w''+\frac{n-k}{k}\frac{\Phi'(w)w'}{r}\right)\\
&\geq & c_{n,k}kr^{1-k}(\Phi'(w))^k(w')^{k-1}\left(w''+\frac{n-k}{k}\frac{w'}{r}\right)\\
& = &(\Phi'(w))^k S_k(D^2 w) = \frac{(\tilde{g}(-\Phi(w)))^k}{(g(-w))^k}S_k(D^2 w) = \lambda \frac{
r^{\mu-2}}{(1+r^2)^\frac{\mu}{2}}(1-\Phi(w))^q.
\end{eqnarray*}
Therefore $\Phi(w)$ is a bounded subsolution of $(P_\lambda)$ and thus, by the method of super and subsolutions, we have, by \cite[Theorem 3.3]{Wang94}, a solution $u\in L^{\infty}((0,1))$ of $(P_\lambda)$ with $\Phi(w) \leq u \leq 0$. Now, to prove that ($P_{\lambda}$) admits a maximal solution, we consider $u_1$ as the solution of
\begin{equation*}
(Q)\;\;\begin{cases}
S_k(D^2 u_1)= \lambda \frac{
|x|^{\mu-2}}{(1+|x|^2)^\frac{\mu}{2}}&\mbox{in }\;\; B,\\
u_1=0 &\mbox{on }\; \partial B.
\end{cases}
\end{equation*}
Note that $u_1\in \Phi^k_0(B)$ since $\mu\geq 2$. As $u$ is in particular a subsolution of $(Q)$, we have $u\leq u_1$ on $B$ by the comparison principle \cite{TrXu99}. Next, we define $u_i$ ($i=2,3,\ldots$) as the solution of
\begin{equation*}
\begin{cases}
S_k(D^2 u_i)= \lambda \frac{
|x|^{\mu-2}}{(1+|x|^2)^\frac{\mu}{2}}(1-u_{i-1})^q &\mbox{in }\;\; B,\\
u_i=0 &\mbox{on }\; \partial B.
\end{cases}
\end{equation*}
Note that $u_i\in \Phi^k_0(B)$ ($i=2,3,\ldots$) since $\mu\geq 2$. Using again the comparison principle we obtain a increasing sequence $\{u_i\}$, which is bounded from below by $u$ and by 0 from above. Hence, we can pass to the limit to obtain a classical solution $u_{max}$ of ($P_{\lambda}$), which is maximal since the recursive sequence $\{u_i\}$ does not depend on the subsolution $u$. Now let $\lambda_1<\lambda_2$ and $u_{\lambda_1}$, $u_{\lambda_2}$ be maximal solutions of $(P_{\lambda_i})$ ($i=1,2$), respectively. Since $u_{\lambda_2}$ is a subsolution of $(P_{\lambda_1})$, we have $u_{\lambda_2} \leq u_{\lambda_1}$ by the maximality of $u_{\lambda_1}$.
\end{proof}

\subsection{Proof of Theorem \ref{Exist}}

Fix $\mu\geq 2$. For $R>1$, let $B_R$ be a ball centered at zero with radius $R$ such that $\overline{B}\subset B_R$, and let $\eta$ be the solution of
\begin{equation*}
\begin{cases}
S_k(D^2\eta)= 1 &\mbox{in }\;\; B_R,\\
\eta=0 &\mbox{on }\; \partial B_R.
\end{cases}
\end{equation*}
Then there exists a constant $\beta$ such that $\eta<\beta<0$ on $\partial{B}$.
Set $M=\max_{x\in\overline{B}}\,|\eta(x)|$, $C=C(\mu)=\max_{r\in [0,1]} \frac{
r^{\mu-2}}{(1+r^2)^\frac{\mu}{2}}>0$ and take $\lambda<C^{-1}(1+M)^{-q}$. Then
\[
S_k(D^2\eta)=1>\lambda C(1+M)^q\geq\lambda C(1-\eta)^q\geq\lambda \frac{
|x|^{\mu-2}}{(1+|x|^2)^\frac{\mu}{2}}(1-\eta)^q \;\;\mbox{in}\;\;B.
\]
By \cite[Theorem 3.3]{Wang94}, for every $\lambda\in (0,C^{-1}(1+M)^{-q})$ there exists a solution $u_{\lambda}$ of $(P_\lambda)$. Thus we may define
\begin{equation}\label{lambda-ast}
\lambda^\ast=\sup\{\lambda>0:\, \text{ there is a solution } u_{\lambda}\in C^2(B) \text{ of } \eqref{Eq:Ma:0}\}.
\end{equation}
Then $\lambda^\ast>0$.

To see that $\lambda^\ast$ is finite, we consider the inequality
\begin{equation}\label{Eq:Nineq}
\Delta u\geq C(n,k)[S_k(D^2 u)]^\frac{1}{k},
\end{equation}
which holds for every $u\in \Phi^k(B)$, see e.g. \cite[Proposition 2.2, part (4)]{Wang94} and the comments therein. Consider now the eigenvalue problem
\begin{equation*}
(E_m)\;\;
\begin{cases}
-\Delta u= \lambda m(x)u &\mbox{in }\;\; B,\\
u=0 &\mbox{on }\; \partial B,
\end{cases}
\end{equation*}
where $m(x):=h(|x|)^\frac{1}{k}$. It is known that problem $(E_m)$ has a first eigenvalue, $\lambda_{1,m}>0$, associated with an eigenfunction $\phi_{1,m}>0$, see e.g. \cite[Theorem 0.6]{AmPr93}. It is not difficult to see that there exists a constant $M>0$ such that for every $u<0$, we have $(1-u)^\frac{q}{k}\geq \frac{M|u|}{C(n,k)}$. Let $\lambda\in (0,\lambda^\ast)$ and let $u$ be a solution of problem $(P_\lambda)$. Then, using \eqref{Eq:Nineq}, we obtain
\begin{eqnarray*}
\lambda_{1,m}\int_B |u|m(x)\phi_{1,m}&=&-\int_B\Delta\phi_{1,m}|u|\\
&=&-\int_B Du D\phi_{1,m}\\
&\geq & M\lambda^\frac{1}{k}\int_B |u|m(x)\phi_{1,m},
\end{eqnarray*}
which in turn implies that $\lambda\leq\left(\frac{\lambda_{1,m}}{M}\right)^k$. Thus $\lambda^\ast$ is finite.

Now, let $\lambda\in (0,\lambda^{\ast})$. Then $u_\lambda$ is a maximal bounded solution of ($P_{\lambda}$) by Lemma \ref{max:sol} applied to $\lambda_0 \in (\lambda, \lambda^{\ast})$.

\medbreak

Next let $\lambda_i$ be an increasing sequence such that $\lambda_i\rightarrow \lambda^\ast$ as $i\rightarrow +\infty$ and let $u_{\lambda_i}$ be a maximal solution of $(P_{\lambda_i})$. By Lemma \ref{max:sol}, for all $r\in [0,1]$ we have $u_{\lambda_{i+1}}(r)\leq u_{\lambda_{i}}(r)\leq 0$. On the other hand, integrating the first equation in $(P_{\lambda_i})$, we obtain
\[
u_{\lambda_i}(r)=-\int_{r}^{1}\left[c_{n,k}^{-1}\,\tau^{k-n}\int_{0}^{\tau}s^{n-1}\frac{s^{\mu-2}}{(1+s^2)^\frac{\mu}{2}}\lambda_{i}(1-u_{\lambda_i}(s))^q\,ds\right]^{\frac{1}{k}}d\tau.
\]
Applying the monotone convergence theorem twice, we conclude that
\[
u^\ast(r):=\lim_{i\rightarrow +\infty}u_{\lambda_i}(r), \;\;\; \text{ exists a.e. } r\in  (0,1)
\]
and
\[
u^\ast(r)=-\int_{r}^{1}\left[c_{n,k}^{-1}\,\tau^{k-n}\int_{0}^{\tau}s^{n-1}\frac{s^{\mu-2}}{(1+s^2)^\frac{\mu}{2}}\lambda^\ast(1-u^\ast(s))^q\,ds\right]^{\frac{1}{k}}d\tau, \;\;\; \text{ a.e. } r\in  (0,1).
\]
The assertion concerning the non-existence of solutions follows directly from the definition of $\lambda^\ast$. 

Next we obtain a lower bound for $\lambda^*$ in Theorem \ref{Exist}. The function $v(x)=\frac{k}{q-k}\left(|x|^2-1\right)$ satisfies
\[
S_k(D^2 v)=nc_{n,k}(2k)^k(q-k)^{-k}\geq \binom{n}{k}(2k)^k\frac{(q-k)^{q-k}}{q^q}(1-v)^q\geq C^{-1}\binom{n}{k}(2k)^k\frac{(q-k)^{q-k}}{q^q}h(r)(1-v)^q,
\]
where the constant $C=C(\mu)$ is given by
\[
C=\max_{r\in [0,1]}h(r)=\max_{r\in [0,1]}\frac{r^{\mu-2}}{(1+r^2)^\frac{\mu}{2}}=
\begin{cases}
1\;\;& \mbox{if }\;\;\mu=2,\\
\frac{(\frac{\mu-2}{2})^\frac{\mu-2}{2}}{(\frac{\mu}{2})^\frac{\mu}{2}}\;\;& \mbox{if }\;\;2< \mu\leq 4,\\
2^{-\frac{\mu}{2}}\;\;& \mbox{if }\;\;\mu>4.
\end{cases}
\]
Hence $v$ is a subsolution of $(P_\lambda)$ for all $\lambda\leq C^{-1}\binom{n}{k}(2k)^k\frac{(q-k)^{q-k}}{q^q}$. Since $v_0\equiv 0$ is a supersolution and $v\leq v_0$, for any such $\lambda$ there exists a solution of $(P_\lambda)$. By the first statement of Theorem \ref{Exist}, this shows that
\[
\lambda^*\geq C^{-1}\binom{n}{k}\left(\frac{2k}{q-k}\right)^k\left(\frac{q-k}{q}\right)^q.
\]
Setting $d(\mu) = C^{-1}$ we conclude the proof of Theorem \ref{Exist}.\hfill $\square$

\section{Existence of a singular solution of $(P_{\lambda})$}\label{SingularSection}

In this section we obtain a singular solution of $(P_{\lambda})$ which is derived from a proper non-autonomous Lotka-Volterra system.

\subsection{A non-autonomous Lotka-Volterra system}

We start considering the radial problem
\begin{equation}\label{RaPr:1}
\begin{cases}
\left(r^{n-k}(u')^k\right)'= c_{n,k}^{-1}\,r^{n-1}f(r,u),& 0<r<1,\\
u(r)<0, &0\leq r<1,\\
u'(0) = 0,\, u(1) =0.
\end{cases}
\end{equation}
Let $u$ be a solution of \eqref{RaPr:1} and set $w=u-1$. Then $w$ is a solution of
\begin{equation}\label{Eq:IVP:0}
\left(r^{n-k}(w')^k\right)'= r^{n-1}c_{n,k}^{-1}\,f(r,w+1), \;\; (r>0).
\end{equation}
To obtain a Lotka-Volterra system, we set
\begin{equation}\label{newtrans0}
x(t)=r^k\frac{c_{n,k}^{-1}\,f(r,w+1)}{(w')^{k}},\; y(t)=r\frac{w'}{-w}, \;r=e^t,
\end{equation}
where $w'=dw/dr$. We point out that this change of variable is well-known in the case $k=1$, see e.g. \cite{BatLi10, BaPf88, WaZhLi12, Wola99}. In the framework of the $k$-Hessian operator, this transformation has been recently introduced in \cite{SaVe17}. Further, for
\begin{equation}\label{ache}
f(r,w+1)=\lambda h(r)(-w)^q
\end{equation}
we see that such a change of variables becomes optimal for equation \eqref{Eq:IVP:0} since, depending on the weight $h$, we obtain either an autonomous or a non-autonomous Lotka-Volterra system. More precisely, after some calculation one can see that the pair of functions $(x(t), y(t))$ solves the following non-autonomous Lotka-Volterra system:
\begin{equation*}
(LVS_{q,\rho}) \qquad\begin{cases}
\frac{dx}{dt}=x\left[\rho(t)-x-q y\right],
\,\\
\frac{dy}{dt} = y\left[-\frac{n-2k}{k}+\frac{x}{k}+y\right],
\end{cases}
\end{equation*}
where $\rho(t)=n+r\frac{h'(r)}{h(r)}$\; and\; $r=e^t$.

Note that we can recover the function $w$ by the formula
\begin{equation}\label{inverse0}
w=-\left[c_{n,k}^{-1}\lambda\,r^{2k}h(r)\right]^{-\frac{1}{q-k}}(xy^k)^{\frac{1}{q-k}}.
\end{equation}

Now, in order to transform the problem $(P_\lambda)$ into the system $(LVS_{q,\rho})$, we set
\[
h(r)=\frac{r^{\mu-2}}{(1+r^2)^{\frac{\mu}{2}}}
\]
thus obtaining the non-autonomous dynamical system:
\begin{equation*}
(MS_{q,\mu})\qquad\begin{cases}
\frac{dx}{dt}=x\left[n-2  + \frac{\mu}{1+e^{2t}}-x-q y\right],
\,\\
\frac{dy}{dt} = y\left[-\frac{n-2k}{k}+\frac{x}{k}+y\right].
\end{cases}
\end{equation*}

Since the limits $\lim_{t\to \pm \infty} \rho(t) =\lim_{t\to \pm \infty}( n-2  + \frac{\mu}{1+e^{2t}})=:\rho_{\pm}$ exist, we may consider the system $(MS_{q,\mu})$ as an asymptotically autonomous system in the sense of Thieme (see \cite{Thie94}). Thus we can describe the flow of $(MS_{q,\mu})$ from the autonomous systems $(LVS_{q,\rho_{\pm}})$.

\subsection{The linearization of $(LVS_{q,\rho_{\pm}})$ at the stationary points}\label{LinSPoints}

The following decompositions:
\begin{eqnarray}
\rho(t) & = n-2+\mu -\frac{\mu e^{2t}}{1+e^{2t}}\label{rho:minus}\\
& = n-2 +  \frac{\mu e^{-2t}}{1+ e^{-2t}},\label{rho:plus}
\end{eqnarray}
are useful when $t\to -\infty$ and $t\to +\infty$, respectively.

Now let $P=(a,b)$ be a stationary point of $(LVS_{q,\rho_{\pm}})$, if we introduce the coordinates $\bar{x}:=x-a$ and $\bar{y}:=y-b$, then using e.g. \eqref{rho:minus} we can write $(MS_{q,\mu})$ as a time-dependent perturbation of $(LVS_{q,\rho_{-}})$:
\begin{equation}\label{perturba}
\begin{pmatrix}
\frac{d \bar{x}}{dt}\\
\\
\frac{d \bar{y}}{dt}
\end{pmatrix}
= A \begin{pmatrix}
 \bar{x}\\
\\
\bar{y}
\end{pmatrix}
+ \begin{pmatrix}
-\bar{x}^2 -q\bar{x}\bar{y}\\
\\
\frac{\bar{x}\bar{y}}{k} + \bar{y}^2
\end{pmatrix}
+ \begin{pmatrix}
-(\bar{x}+a)\frac{\mu e^{2t}}{1+e^{2t}}\\
\\
0
\end{pmatrix}
\end{equation}
with
\[
A:=
\begin{pmatrix}
n-2+\mu -2a -qb & -qa\\
\\
\frac{b}{k} & \frac{a}{k} + 2b - \frac{n-2k}{k}
\end{pmatrix}
\]

{\bf Case $(LVS_{q,\rho_-})$}: The critical points of $(LVS_{q,\rho_-})$ are $P_1(0,0), P_2(0,\frac{n-2k}{k}), P_3(n-2+\mu,0)$, and
\begin{equation}\label{interiorcritipoint:minus}
\left(\frac{q(n-2k)- k(n-2+\mu)}{q-k}, \frac{2k-2+\mu}{q-k}\right)=:(\hat{x},\hat{y}).
\end{equation}
Note that, under the assumptions $2k<n,\, k<q$ and $2\leq\mu$, the first three critical points belong to $\RR^2_+:=\{(x,y)\in\RR^2:x\geq 0,\,y\geq 0\}$ and they are saddle points. The fourth critical point $(\hat{x},\hat{y})$ belongs to the interior of $\RR^2_+$ if, and only if, $q> k(n-2+\mu)/(n-2k)$. Further, $(\hat{x},\hat{y})$ is a stable node for $q> q^*(k,\mu-2)=\frac{(n+2)k+(\mu-2)(k+1)}{n-2k}$. It is not difficult to see that the (bounded) orbit $(x(t),y(t))$ of $(LVS_{q,\rho_-})$ starts at $P_3(n-2+\mu,0)$. See \cite{SaVe17}.

\medbreak

{\bf Case $(LVS_{q,\rho_+}) $}: Considering the decomposition of $\rho(t)$ as in \eqref{rho:plus}, we obtain
\[
\begin{pmatrix}
\frac{d \bar{x}}{dt}\\
\\
\frac{d \bar{y}}{dt}
\end{pmatrix}
= A \begin{pmatrix}
 \bar{x}\\
\\
\bar{y}
\end{pmatrix}
+ \begin{pmatrix}
-\bar{x}^2 -q\bar{x}\bar{y}\\
\\
\frac{\bar{x}\bar{y}}{k} + \bar{y}^2
\end{pmatrix}
\]
with
\[
A:=
\begin{pmatrix}
n-2 -2a -qb & -qa\\
\\
\frac{b}{k} & \frac{a}{k} + 2b - \frac{n-2k}{k}.
\end{pmatrix}
\]
The critical points of $(LVS_{q,\rho_+})$ are $P_1(0,0), P_2(0,\frac{n-2k}{k}), P_3(n-2,0)$, and
\begin{equation}\label{interiorcritipoint:plus}
\left(\frac{q(n-2k)- k(n-2)}{q-k}, \frac{2k-2}{q-k}\right)=:(\tilde{x},\tilde{y}).
\end{equation}
We point out that $(LVS_{q,\rho_+})$ has four critical points whenever that $k>1$. In the case $k=1$ the critical point $(\tilde{x},\tilde{y})$ coincides with $P_3(n-2,0)$. Further, the first three critical points are saddle points and $(\tilde{x},\tilde{y})$ is a stable focus provided that $q>q^*(k,-2)$  where
\begin{equation}\label{q:ast:kalfa}
q^*(k,-2) :=\frac{nk-2}{n-2k}.
\end{equation}
Note that $q^*(k,-2) > k(n-2)/(n-2k)$. Further, for all $q\geq k(n-2)/(n-2k)$, $(\tilde{x}, \tilde{y})$ belongs to $\RR^2_+$ and for $q\geq q^*(k,\mu-2)$ both critical points $(\tilde{x}, \tilde{y})$ and $(\hat x, \hat y)$ belong to the interior of $\RR^2_+$.

\subsection{The flow of $(MS_{q,\mu})$}
Note that $\rho(t)=n-2+\mu-\frac{\mu e^{2t}}{1+e^{2t}}$ is strictly decreasing from $\rho(-\infty)=n-2+\mu$ to $\rho(+\infty)=n-2$ on $\bar{\RR}:=[-\infty,+\infty]$. We define
\begin{eqnarray*}
S(t,x,y)&:=&\rho(t)-x-q y,\;\; t\in\bar{\RR},\\
W(x,y)&:=&-\frac{n-2k}{k}+\frac{x}{k}+y
\end{eqnarray*}
and we write $(MS_{q,\mu})$ in the form
\begin{eqnarray*}
x'&=& x S(t,x,y)\\
y'&=& y W(x,y).
\end{eqnarray*}
For functions $F:\RR^+\times\RR^+\rightarrow\RR$, we let
\[
F_0:=F^{-1}\{0\},\;\; F_+:=F^{-1}\{\RR^+\},\;\; F_-:=F^{-1}\{\RR^-\}.
\]
Now $S_0(t),\; t\in\bar{\RR}$, is the straight line $y=-\frac{1}{q}\,x+\frac{\rho(t)}{q}$ with intercepts $(\rho(t),0)$ and $\left(0,\frac{\rho(t)}{q}\right)$. It moves downward in the closed strip $Z$ between $S_0(-\infty)$ and $S_0(+\infty)$ when $t$ runs from $-\infty$ to $+\infty$. On the other hand, $W_0$ is the fixed straight line $y=\frac{n-2k}{k}-\frac{x}{k}$. Furthermore, we define
\[
G(x,y):=x+\frac{(n-2k)(q+1)}{k+1}\left(\frac{k}{n-2k}\,y-1\right).
\]
Note that $G_0$ is the straight line $y=-\frac{k+1}{k(q+1)}\,x+\frac{n-2k}{k}$ with intercepts $\left(\frac{(n-2k)(q+1)}{k+1},0\right)$ and $(0,\frac{n-2k}{k})$.
\begin{lemma}\label{inward}
Let $\varphi$ be a solution of $(MS_{q,\mu})$ on $(T_0,T)$. Let $q\geq q^*(k,\mu-2)$.
\begin{itemize}
\item [i)] If $\varphi(t_0)\in G_{-}\cup G_0\;\;\mbox{for}\;\;t_0\geq -\infty$, then $\varphi(t)\in G_-$ for $t>t_0$.
\item [ii)] If $\varphi(t_0)\in G_{+}\cup G_0\;\;\mbox{for}\;\;t_0>-\infty,$ then $\varphi(t)\in G_+$ for $-\infty\leq t<t_0$.
\end{itemize}
\begin{proof}
We have
\begin{eqnarray*}
\frac{d}{dt}\,G(\varphi(t))&=&x'+\frac{k(q+1)}{k+1}\,y'\\
&=& x(\rho(t)-x-q y)+\frac{k(q+1)}{k+1}\,y\left(-\frac{n-2k}{k}+\frac{x}{k}+y\right).
\end{eqnarray*}
If $\varphi(t)\in G_0$, then $-x=\frac{(n-2k)(q+1)}{k+1}\left(\frac{k}{n-2k}\,y-1\right)$, whence
\begin{eqnarray*}
\frac{d}{dt}\,G(\varphi(t))|_{\varphi(t)\in G_0}&=& x\left[\rho(t)+\frac{k(q+1)}{k+1}\,y-\frac{(n-2k)(q+1)}{k+1}-(q+1)y+\frac{q+1}{k+1}\,y\right]\\
&=&x\left[\rho(t)-\frac{(n-2k)(q+1)}{k+1}\right]<0\;\;\mbox{for all}\;\;t.
\end{eqnarray*}
\end{proof}
\end{lemma}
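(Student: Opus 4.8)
The plan is to read the lemma as a one-sided \emph{transversality} (Nagumo-type) statement for the affine function $G$. Since $G_0$ is a straight line, $G_-$ and $G_+$ are the two open half-planes it bounds, and everything reduces to showing that along $G_0$ the vector field of $(MS_{q,\mu})$ always points into the $G_-$ side. Accordingly, I would differentiate $G$ along an orbit and use the two components of the system:
\[
\frac{d}{dt}G(\varphi(t)) = x' + \frac{k(q+1)}{k+1}\,y' = x(\rho(t)-x-qy)+\frac{k(q+1)}{k+1}\,y\Big(-\tfrac{n-2k}{k}+\tfrac{x}{k}+y\Big),
\]
and then evaluate on $G_0$, where the defining relation for $G$ lets me eliminate the quadratic terms.

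The key sign computation is the one already exhibited: on $G_0$ the right-hand side collapses to
\[
\left.\frac{d}{dt}G(\varphi(t))\right|_{\varphi(t)\in G_0}=x\Big[\rho(t)-\frac{(n-2k)(q+1)}{k+1}\Big].
\]
The point I would emphasize is that the threshold constant is exactly the supremum of $\rho$: a short algebraic check shows that $q\geq q^*(k,\mu-2)$ is equivalent to $n-2+\mu\leq \frac{(n-2k)(q+1)}{k+1}$, i.e.\ to $\rho(-\infty)\leq \frac{(n-2k)(q+1)}{k+1}$. Since $\rho$ is strictly decreasing with supremum $\rho(-\infty)=n-2+\mu$, it follows that $\rho(t)<\frac{(n-2k)(q+1)}{k+1}$ at every finite $t$, so the bracket is negative and the derivative is strictly negative on $G_0$ for finite $t$ (the prefactor $x$ being positive along the orbits of interest).

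With transversality in hand, part (i) follows by a \emph{first-crossing} argument. For finite $t_0$ with $\varphi(t_0)\in G_-\cup G_0$, suppose toward a contradiction that $G(\varphi(t_1))=0$ for some $t_1>t_0$ and take $t_1$ minimal; then $G(\varphi)<0$ on $(t_0,t_1)$ (if $\varphi(t_0)\in G_0$, strict transversality at $t_0$ makes $G(\varphi)$ drop below $0$ immediately), so $\varphi$ reaches $G_0$ from the $G_-$ side and hence $\frac{d}{dt}G(\varphi(t_1))\geq 0$, contradicting the strict negativity at $t_1$. Thus $\varphi(t)\in G_-$ for all $t>t_0$. Part (ii) is then immediate from (i) by disjointness: if $\varphi(t_0)\in G_+\cup G_0$ at a finite $t_0$ while $\varphi(t')\in G_-\cup G_0$ for some $t'<t_0$, then (i) applied at $t'$ would force $\varphi(t_0)\in G_-$, which is impossible since $G_-$, $G_0$, $G_+$ are disjoint; hence $\varphi(t)\in G_+$ for all $t<t_0$.

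The step I expect to be most delicate is the behaviour at the endpoint $t=-\infty$ and at the borderline exponent $q=q^*(k,\mu-2)$, where the bracket above vanishes in the limit and transversality degenerates to a non-strict inequality. This is precisely why (i) is stated for $t_0\geq-\infty$ but concludes only for $t>t_0$, whereas (ii) requires $t_0>-\infty$: one must invoke the strict monotonicity of $\rho$ to recover strict transversality at every finite time and to push an orbit issuing from a critical point lying on $G_0$ into $G_-$. A secondary point is to prevent the prefactor $x$ from vanishing; for the orbits relevant to $(P_\lambda)$ this is automatic, since the open first quadrant is invariant and these orbits satisfy $x>0$ throughout.
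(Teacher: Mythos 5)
Your proposal is correct and takes essentially the same route as the paper: the identical computation of $\frac{d}{dt}G(\varphi(t))$ along an orbit and its collapse on $G_0$ to $x\left[\rho(t)-\frac{(n-2k)(q+1)}{k+1}\right]$, with the sign coming from the equivalence of $q\geq q^*(k,\mu-2)$ with $\rho(-\infty)\leq\frac{(n-2k)(q+1)}{k+1}$ and the strict monotonicity of $\rho$. The extra material you supply (the first-crossing argument for (i), the disjointness argument for (ii), and the explicit caveat that $x>0$ along the relevant orbits, which is needed since the stationary point $P_2(0,\tfrac{n-2k}{k})\in G_0$ would otherwise be a counterexample) just makes explicit what the paper's terse proof leaves implicit.
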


Next we link the solution of the initial value problem
\begin{equation}\label{IVPw}
\begin{cases}
(r^{n-k}(w')^k)'=r^{n-1}c_{n,k}^{-1}\lambda\frac{r^{\mu-2}}{(1+r^2)^{\mu/2}}(-w)^q, & r>0,\\
w(0)=w_0\in (-\infty,0),\\
w'(0)=0
\end{cases}
\end{equation}
with a property of the orbits of system $(MS_{q,\mu})$.
\begin{proposition}\label{Prop:Equiv} Suppose that $q\geq q^*(k,\mu-2)$. Then there exists a unique global solution $w$ of \eqref{IVPw} in the regularity class $C^2(0,\infty)\cap C^1[0,\infty)$. Furthermore, the function $w$ defined by \eqref{inverse0} is the unique solution of \eqref{IVPw} if, and only if, the orbit $(x(t),y(t))$ of system $(MS_{q,\mu})$ given by \eqref{newtrans0} starts at the point $P_3(n-2+\mu,0)$.
\end{proposition}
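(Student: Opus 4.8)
The plan is to combine the local solvability of \eqref{IVPw} near $r=0$ with a phase-plane analysis of $(MS_{q,\mu})$, using the latter for both the global statement and the characterization. First I would integrate the equation in \eqref{IVPw} twice, using $w'(0)=0$, to recast it in the fixed-point form
\[
w(r)=w_0+\int_0^r\left[c_{n,k}^{-1}\lambda\,\tau^{k-n}\int_0^\tau s^{n+\mu-3}(1+s^2)^{-\mu/2}(-w(s))^q\,ds\right]^{1/k}\,d\tau .
\]
On a short interval $[0,\delta]$ and on the ball $\{\|w-w_0\|_\infty\le -w_0/2\}$, the right-hand side is a contraction: the inner integral is $O(\tau^{n+\mu-2})$, so after the factor $\tau^{k-n}$ the bracket is comparable to $\tau^{k+\mu-2}$ and bounded below by a positive multiple of it, and hence the non-Lipschitz behaviour of $s\mapsto s^{1/k}$ at $0$ is dominated by the smallness of the integral, yielding $|w_1(r)-w_2(r)|\lesssim r^{(2k+\mu-2)/k}\|w_1-w_2\|_\infty$. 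Banach's theorem then gives a unique local solution; since the integrand is nonnegative, $w'\ge0$, so $w$ is increasing and negative and extends to a maximal interval $[0,R)$ on which $w<0$.

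Under the substitution \eqref{newtrans0} the pair $(x,y)$ solves $(MS_{q,\mu})$ on $(-\infty,\log R)$, while \eqref{inverse0} inverts the correspondence, so the content of the equivalence is entirely the limit $r=e^t\to0^+$. For the forward implication I would feed $w(0)=w_0$, $w'(0)=0$ into the fixed-point representation to obtain the leading order $w'\sim c\,r^{(k+\mu-2)/k}$; substituting into \eqref{newtrans0}, the factors $(-w_0)^q$ cancel and produce $x(t)\to n-2+\mu$ and $y(t)=r\,w'/(-w)\to0$, so the orbit starts at $P_3$, independently of $w_0$.

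For global existence and the converse I would use that $\{x=0\}$ and $\{y=0\}$ are invariant under the Lotka–Volterra flow: the orbit leaves $P_3$ along its one-dimensional unstable manifold (the linearization at $P_3$, Section~\ref{LinSPoints}, has eigenvalues $-(n-2+\mu)<0$ and $\nu:=(2k+\mu-2)/k>0$) into the open first quadrant, so $x(t),y(t)>0$ for all $t$, whence $-w=[c_{n,k}^{-1}\lambda\,r^{2k}h(r)]^{-1/(q-k)}(xy^k)^{1/(q-k)}>0$, i.e. $w<0$ for every $r$. Since $q\ge q^*(k,\mu-2)$, \lemref{inward} confines the orbit to $G_-$; together with $x,y>0$ this traps it in a bounded triangle, so it exists for all $t$ and $R=\infty$, yielding a global negative solution. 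Conversely, along the unstable manifold $y(t)\sim Ce^{\nu t}=Cr^{\nu}$ and $x(t)-(n-2+\mu)=O(r^{\nu})$; since $k\nu=2k+\mu-2$ matches the exponent of $r^{2k}h(r)\sim r^{2k+\mu-2}$ near $0$, the powers of $r$ cancel in \eqref{inverse0}, giving $-w\to$ a finite positive constant (so $w_0<0$ is finite) and $w'=(-w)y/r\sim\text{const}\cdot r^{(k+\mu-2)/k}\to0$. Hence the reconstructed $w$ lies in $C^2(0,\infty)\cap C^1[0,\infty)$ and solves \eqref{IVPw}, and the uniqueness from the first step identifies it as the solution.

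The main obstacle I anticipate is the converse direction: the rates $y\sim Cr^{\nu}$ and $x-(n-2+\mu)=O(r^{\nu})$ must be justified for the genuinely non-autonomous system $(MS_{q,\mu})$ rather than for its frozen limit $(LVS_{q,\rho_-})$. One must verify that the perturbation $-(\bar x+a)\mu e^{2t}/(1+e^{2t})$ in \eqref{perturba}, of order $e^{2t}$ as $t\to-\infty$, preserves the one-dimensional unstable manifold at $P_3$ and its leading exponent $\nu$, so that the exponents align in \eqref{inverse0} to deliver a bounded, nonvanishing $-w$ and a regular $w$ at the origin. This is exactly where the hypothesis $q\ge q^*(k,\mu-2)$ and the trapping region of \lemref{inward} are needed, to keep the unique interior orbit issuing from $P_3$ inside $\RR^2_+$ for all $t$ and to make the two powers of $r$ cancel precisely.
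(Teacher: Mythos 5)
Your local existence/uniqueness step, your computation that the orbit issuing from a regular $w$ tends to $P_3(n-2+\mu,0)$, and even your global-existence route are all sound. In fact the last of these is genuinely different from the paper: you trap the orbit in the bounded triangle $\{x\ge 0,\ y\ge 0,\ G\le 0\}$ via Lemma~\ref{inward} (note that $G(P_3)\le 0$ is exactly equivalent to $q\ge q^*(k,\mu-2)$, so the lemma applies with $t_0=-\infty$) and then observe that $w$ reaching $0$ at a finite radius would force $y=rw'/(-w)\to\infty$, contradicting boundedness; the paper instead verifies the sign condition $B(r)\le 0$ (again equivalent to $q\ge q^*(k,\mu-2)$) and invokes the quasilinear ODE result of \cite{ClMM98}, with uniqueness by the same contraction argument you use. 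Either route works, and yours has the merit of staying inside the phase-plane framework.

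The genuine gap is the converse implication, and you have named it yourself without closing it. To conclude from \eqref{inverse0} that $-w(r)$ tends to a \emph{finite, positive} limit as $r\to 0$ (so that the reconstructed $w$ really solves \eqref{IVPw} with some $w_0\in(-\infty,0)$), one needs the exact rate $y(t)=Ce^{\hat{\mu}t}(1+o(1))$ with $C>0$ and $\hat{\mu}=\frac{2k+\mu-2}{k}$, since only then does $r^{-(2k+\mu-2)}y^k\to C^k$ cancel the singular prefactor; mere convergence $y\to 0$ is useless. You assert this rate ``along the unstable manifold'' of the frozen system $(LVS_{q,\rho_-})$, but $(MS_{q,\mu})$ is non-autonomous and the perturbation $\mu e^{2t}/(1+e^{2t})$ is not negligible: indeed your companion claim $x(t)-(n-2+\mu)=O(r^{\nu})$ is false whenever $\mu>2$, because the forcing makes $\bar{x}$ exactly of order $e^{2t}=r^{2}$ while $r^{\nu}=r^{2+(\mu-2)/k}\ll r^2$ (this particular error is harmless, since only $x\to n-2+\mu$ is used, but it shows the frozen-system picture cannot be invoked verbatim). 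Moreover your suggestion that the hypothesis $q\ge q^*(k,\mu-2)$ and the trapping region of Lemma~\ref{inward} are what repair this is off target: the trapping region controls the orbit globally but says nothing about decay rates at $P_3$, and the local analysis there does not involve $q^*$ at all. What is actually needed is the quantitative bootstrap the paper carries out: setting $\varepsilon:=\bar{x}/k+y$, writing $y(t)=y(t_0)e^{-\hat{\mu}t_0}e^{\hat{\mu}t+\int_{t_0}^t\varepsilon(s)\,ds}$, proving $\bar{x}(t)=O(e^{\min\{2,\hat{\mu}-\delta\}t})$ from the differential inequality for $(\bar{x}^2)'$ in \eqref{xybarsystem}, deducing that $\varepsilon$ is integrable at $-\infty$, and then refining to $y=Ce^{\hat{\mu}t}(1+O(e^{2t}))$ and $\bar{x}=O(e^{2t})$ before passing to the limit in \eqref{inverse0}. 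Without some version of this estimate your converse remains unproven.
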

\begin{proof}
Let $w_0$ be an arbitrary negative number. Defining $B(r)=\int_0^r s^{\frac{(\alpha-\beta)(q+1)}{\beta+1}}(a(s)s^{\theta})'ds$, $r>0$, with $\alpha=n-k$, $\beta=k$, $\gamma=n-1$, $a(s)=c_{n,k}^{-1}\lambda\frac{r^{\mu-2}}{(1+r^2)^\frac{\mu}{2}}$ and $\theta=[(\gamma+1)(\beta+1)-(\alpha-\beta)(q+1)]/(\beta +1)$, we obtain $B(r)\leq 0$ for $r>0$ if, and only if, $q\geq q^*(k,\mu-2)$. Then the global existence of \eqref{IVPw} follows from \cite[Theorem 4.1]{ClMM98}. The uniqueness follows by a simple application of the contraction mapping principle, as in \cite{SaVe16}.

Next, let $w(r)$ be the solution of \eqref{IVPw}. By \eqref{newtrans0} and \eqref{IVPw}, the function $y = y(t)$ satisfies
\begin{equation}\label{limi:y}
\lim_{t\to -\infty}y(t)=\lim_{r\to 0}r\,\frac{w'(r)}{-w(r)} = 0
\end{equation}
and for $x=x(t)$, we have
$
\lim_{t\to -\infty}x(t)=\lim_{r\to 0}\lambda c_{n,k}^{-1}[-w(r)]^q\frac{r^{k}h(r)}{[w'(r)]^k},
$
with $h(r)=\frac{r^{\mu-2}}{(1+r^2)^{\frac{\mu}{2}}}$. Now, by the equation in $\eqref{IVPw}$ and L'H\^{o}\-pital's rule, we have
\begin{eqnarray*}
\lim_{r\to 0}\frac{r^{k}h(r)}{[w'(r)]^k}&=&\lim_{r\to 0}\frac{r^{n}h(r)}{r^{n-k}[w'(r)]^k}=\lim_{r\to 0}\frac{r^n h'(r)+nr^{n-1}h(r)}{r^{n-1}c_{n,k}^{-1}\lambda h(r)[-w(r)]^q}\\
&=&\lim_{r\to 0}\frac{r\frac{h'(r)}{h(r)}+n}{c_{n,k}^{-1}\lambda [-w(r)]^q}=\lim_{r\to 0}\frac{\rho(\ln r)}{c_{n,k}^{-1}\lambda [-w(r)]^q}=\frac{n-2+\mu}{c_{n,k}^{-1}\lambda (-w_0)^q}.
\end{eqnarray*}
Thus
\begin{equation}\label{limi:x}
\lim_{t\to -\infty}x(t)=n-2+\mu.
\end{equation}
From \eqref{limi:y} and \eqref{limi:x} we conclude that
\begin{equation}\label{start}
\lim_{t\to -\infty}(x(t),y(t))=(n-2+\mu,0)=P_3(n-2+\mu,0).
\end{equation}
Conversely, suppose that $(x(t),y(t))\rightarrow P_3(n-2+\mu,0)$ as $t\rightarrow -\infty$. Restricting the general linearization \eqref{perturba} to $\bar{x}=x-\rho_{-}$ with $\rho_-=n-2+\mu$ and $\bar{y}=y$, we obtain
\begin{equation}\label{xybarsystem}
\begin{cases}
\bar{x}'=-\rho_{-}\bar{x}-q\rho_{-}y-\bar{x}^2-q\bar{x}y-\mu(\bar{x}+\rho_{-})g(t),\\
y'=y\left(\hat{\mu}+\frac{\bar{x}}{k}+y\right),
\end{cases}
\end{equation}
where $g(t):=\frac{e^{2t}}{1+e^{2t}}$ and $\hat{\mu}:=\frac{2k+\mu-2}{k}$. Define $\varepsilon(t):=\frac{\bar{x}(t)}{k}+y(t)$. Then, for some $t_0$
\[
y(t)=y(t_0)e^{-\hat{\mu}t_0}e^{\hat{\mu}t+\int_{t_0}^{t}\varepsilon(s)ds}.
\]
Since $\varepsilon(t)\rightarrow 0$ as $t\rightarrow -\infty$, there exists $\delta>0$ small enough such that $|\varepsilon(t)|\leq\delta<\hat{\mu}$ for $t\leq t_0$. Thus
\[
y(t)\leq C_0 e^{(\hat{\mu}-\delta)t},\;\;\; C_0:=y(t_0)e^{-(\hat{\mu}-\delta)t_0}.
\]
If $\varepsilon$ were integrable at $-\infty$, then for some $C>0$,
\begin{equation}\label{yC}
y(t)=C e^{\hat{\mu}t}\cdot e^{\int_{-\infty}^{t}\varepsilon(s)ds},\;\; t\leq t_0.
\end{equation}
Since $\bar{x}(t)\rightarrow 0$, we may assume $\frac{\rho_-}{2}+\bar{x}\geq 0$. By \eqref{xybarsystem}, we have
\begin{eqnarray*}
\frac{1}{2}(\bar{x}^2)'&=&-(\rho_-+\bar{x})\bar{x}^2-q\bar{x}^2y-\mu\bar{x}^2 g(t)-\rho_-(qy+\mu g(t))\bar{x}\\
&\leq &-(\rho_-+\bar{x})\bar{x}^2+\frac{\rho_-}{2}[(qy+\mu g(t))^2+\bar{x}^2]\\
&\leq &\rho_-(q^2y^2+\mu^2 g^2(t)).
\end{eqnarray*}
Then
\begin{eqnarray}\label{xbar2}
\bar{x}^2(t)\leq 2\rho_-\left(q^2\int_{-\infty}^{t}y^2(s)ds+\mu^2\int_{-\infty}^{t}g^2(s)ds\right)
\end{eqnarray}
\begin{eqnarray*}
\leq 2\rho_-\left(\frac{q^2 C_{0}^2}{2(\hat{\mu}-\delta)}\;e^{2(\hat{\mu}-\delta)t}+\mu^2 e^{4t}\right).
\end{eqnarray*}
Thus $\bar{x}(t)=O(e^{\min\{2,\hat{\mu}-\delta\}t})$. We have shown that $\varepsilon$ is integrable at $-\infty$. Hence using $\eqref{yC}$
\begin{eqnarray*}
y(t)&=&C e^{\hat{\mu}t}\left[1+O\left(\int_{-\infty}^{t}\left(\frac{\bar{x}(s)}{k}+y(s)\right)ds\right)\right]\\
&=&C e^{\hat{\mu}t}\left[1+O(e^{2t})\right].
\end{eqnarray*}
Together with \eqref{xbar2} we get $\bar{x}(t)=O(e^{2t})$, and using \eqref{xybarsystem}
\begin{eqnarray*}
\bar{x}'&=& -(n-2+\mu)\bar{x}-\mu (n-2+\mu)e^{2t}-q(n-2+\mu)C e^{\hat{\mu}t}+O(e^{4t})\\
\left[e^{(n-2+\mu)t}\bar{x}\right]'&=&-\mu(n-2+\mu)e^{(\mu+n)t}-q(n-2+\mu)C e^{\frac{(k+1)\mu+nk-2}{k}\,t}+O\left(e^{(n+2+\mu)t}\right)\\
\bar{x}(t)&=& -\frac{\mu (n-2+\mu)}{\mu +n}\,e^{2t}-\frac{q(n-2+\mu)k}{(k+1)\mu+nk-2}\,C e^{\hat{\mu}t}+O(e^{4t})\\
x(t)&=& n-2+\mu-\frac{\mu (n-2+\mu)}{\mu +n}\,e^{2t}-\frac{q(n-2+\mu)k}{(k+1)\mu+nk-2}\,C e^{\hat{\mu}t}+O(e^{4t}),
\end{eqnarray*}
and using \eqref{yC},
\[
y(t)=C e^{\hat{\mu}t}\left[1-\frac{\mu (n-2+\mu)}{2k(\mu +n)}\,e^{2t}-\frac{q(n-2+\mu)-[(k+1)\mu +nk-2]}{\hat{\mu}[(k+1)\mu +nk-2]}\,C e^{\hat{\mu}t}+O(e^{4t})\right].
\]
The previous expressions for $x,y$ together with \eqref{inverse0} imply that

\begin{eqnarray*}
[-w(r)]^{q-k}&=&\frac{c_{n,k}}{\lambda}(1+r^2)^{\frac{\mu}{2}}r^{-(2k+\mu-2)}x(\ln r)[y(\ln r)]^k\\
&\rightarrow& \frac{c_{n,k}}{\lambda}(n-2+\mu)C^k:=[-w_0]^{q-k}\;\; (r\rightarrow 0).
\end{eqnarray*}
On the other hand, differentiating the function in \eqref{inverse0} with respect to $r$, we have
\begin{eqnarray*}
w'(r)&=&\frac{1}{q-k}\frac{w(r)}{r}\left [-2+\frac{\mu}{1+e^{2t}}-(q-k)y(t)-r\frac{h'(r)}{h(r)}\right]\\
&=&-w(r)\frac{y(\ln r)}{r}\\
&\rightarrow& 0\;\; (r\rightarrow 0).
\end{eqnarray*}
Hence the function $w$ defined by \eqref{inverse0} is the unique solution of problem \eqref{IVPw} by the first statement of this proposition. This concludes the proof.
\end{proof}

Now we are in position to construct a singular solution of $(P_{\lambda})$. We begin with the following technical lemma.

\begin{lemma}\label{S3L1}
Suppose that $q>q^*(k,\mu-2)$. Then there exists a $t_0\in\RR$ such that ($MS_{q,\mu}$) admits a solution $x(t),y(t)\in C^1(-\infty,t_0)$ satisfying 
\[
(x(t),y(t))\to(\hat{x},\hat{y})\ \textrm{as}\ t\to -\infty.
\]
\begin{proof}
Let $\bar{x}(t):=x(t)-\hat{x}$, $\bar{y}(t):=y(t)-\hat{y}$, and $\bar{X}(t):=(\bar{x}(t),\bar{y}(t))^T$.
Then, by \eqref{perturba}, $\bar{X}$ satisfies
\[
\frac{d}{dt}\bar{X}=A_0\bar{X}+\cS(t,\bar{X}),
\]
where
\begin{align*}
A_0&:=
\left(
\begin{array}{cc}
n-2+\mu-2\hat{x}-q\hat{y} & -q\hat{x}\\
\frac{\hat{y}}{k} & \frac{\hat{x}}{k}+2\hat{y}-\frac{n-2k}{k}
\end{array}
\right)
=
\left(
\begin{array}{cc}
-\hat{x} & -q\hat{x}\\
\frac{\hat{y}}{k} & \hat{y}
\end{array}
\right),\\
\cS(t,\bar{X})&:=
\left(
\begin{array}{c}
\bar{S}(t,\bar{X})\\
\bar{W}(\bar{X})
\end{array}
\right),\\
\bar{S}(t,\bar{X})&:=-\bar{x}^2-q\bar{x}\bar{y}+(\bar{x}+\hat{x})\frac{\mu e^{2t}}{1+e^{2t}},\\
\bar{W}(\bar{X})&:=\frac{\bar{x}\bar{y}}{k}+\bar{y}^2.
\end{align*}
Let $\XX:=C((-\infty,t_0),\RR^2)$ and let $\e>0$ be small.
Here $t_0$ and $\e$ will be chosen later.
We define the ball $B_{\e}:=\left\{X\in\XX:\ \left\|X\right\|_{\XX}:=\sup\{ \left\|X(t)\right\|_{\RR^2}:\, t\in (-\infty, t_0)\}<\e\right\}$.
We show that the Lipschitz constants of $\bar{S}$ and $\bar{W}$ are small.
Let $X_1:=(x_1,y_1)^T, X_2:=(x_2,y_2)^T\in B_{\e}$, and $O:=(0,0)^T$.
When $t$ is large and negative, we have $\mu e^{2t}/(1+e^{2t})<\e$, and whence
\begin{align}
|\bar{S}(t,X_1)-\bar{S}(t,X_2)|
&\le |x_1-x_2||x_1+x_2|+q(|x_1-x_2||y_1|+|y_1-y_2||x_2|)+|x_1-x_2|\frac{\mu e^{2t}}{1+e^{2t}}\nonumber\\
&\le C\e(|x_1-x_2|+|y_1-y_2|),\label{S3L1E1}\\
|\bar{W}(X_1)-\bar{W}(X_2)|
&\le \frac{1}{k}(|x_1-x_2||y_1|+|y_1-y_2||x_2|)+|y_1-y_2||y_1+y_2|\nonumber\\
&\le C\e(|x_1-x_2|+|y_1-y_2|),\label{S3L1E2}\\
|\bar{S}(t,O)|&=|\hat{x}|\frac{\mu e^{2t}}{1+e^{2t}},\label{S3L1E3}\\
|\bar{W}(O)|&=0.\label{S3L1E4}
\end{align}
By $\cF(\bar{X}(t))$ we define
\[
\cF(\bar{X}(t)):=\int_{-\infty}^te^{(t-\tau)A_0}\cS(\tau,\bar{X}(\tau))d\tau.
\]
We find a solution of the equation $\bar{X}(t)=\cF(\bar{X}(t))$ in $B_{\e}$ if $\e>0$ is small and $t_0$ is large and negative. As seen in Section \ref{LinSPoints}, $A_0$ has two eigenvalues with negative real parts when $q>q^*(k,\mu-2)$. Therefore, there exists an $\alpha>0$ such that $\left\|e^{tA_0}\right\|_{\cL(\RR^2,\RR^2)}\le Ce^{-\alpha t}$. If $t<t_0$, then by (\ref{S3L1E1}) and (\ref{S3L1E2}), we have
\begin{align*}
\left\|\cF (X_1(t))-\cF (X_2(t))\right\|_{\RR^2}
&\le\int_{-\infty}^t\left\|e^{(t-\tau)A_0}(\cS (\tau,X_1(\tau))-\cS (\tau,X_2(\tau)))\right\|_{\RR^2}d\tau\\
&\le \int_{-\infty}^t Ce^{-\alpha(t-\tau)}d\tau C\e \left\|X_1-X_2\right\|_{\XX}\\
&\le\frac{C\e}{\alpha}\left\|X_1-X_2\right\|_{\XX}.
\end{align*}
We can now choose $\e>0$ sufficiently small if $t_0$ is large and negative. Therefore, there exists a $t_0\in\RR$ such that
\[
\left\|\cF (X_1)-\cF(X_2)\right\|_{\XX}\le\frac{1}{2}\left\|X_1-X_2\right\|_{\XX}\ \textrm{for}\ X_1,X_2\in B_{\e}.
\]
Now from (\ref{S3L1E3}) and (\ref{S3L1E4}) we see that $\left\|\cS(t,O)\right\|_{\RR^2}=o(1)$ as $t\to -\infty$.
Thus, $\left\|\cF (O)\right\|_{\XX}=o(1)$ as $t\to -\infty$.
We have
\begin{equation}\label{S3L1E5}
\left\|\cF (\bar{X})\right\|_{\XX}\le
\left\|\cF (\bar{X})-\cF (O)\right\|_{\XX}+\left\|\cF (O)\right\|_{\XX}\le \frac{1}{2}\e+o(1)<\e
\end{equation}
provided that $t_0$ is large and negative.
Hence $\cF$ is a contraction mapping on $B_{\e}$.
It follows from the contraction mapping theorem that $\cF$ has a unique fixed point in $B_{\e}$ which is a solution of $\bar{X}(t)=\cF (\bar{X}(t))$.
When $t_0$ is large and negative, $\e>0$ can be chosen arbitrarily small.
By (\ref{S3L1E5}) and the uniqueness of the fixed point in $B_{\e}$, we conclude that $\left\|\bar{X}(t)\right\|_{\RR^2}\to 0$ as $t\to -\infty$.
Thus, $\bar{X}(t)$ is the desired solution.
\end{proof}
\end{lemma}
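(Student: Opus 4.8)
The plan is to realize the desired trajectory as the unique fixed point of a variation-of-constants operator built around the stable equilibrium $(\hat x,\hat y)$ of the limiting autonomous system $(LVS_{q,\rho_-})$. First I would translate the equilibrium to the origin by setting $\bar x=x-\hat x$, $\bar y=y-\hat y$ and rewriting $(MS_{q,\mu})$, via the decomposition \eqref{perturba}, in the form $\frac{d}{dt}\bar X=A_0\bar X+\cS(t,\bar X)$, where $A_0$ is the Jacobian of $(LVS_{q,\rho_-})$ at $(\hat x,\hat y)$ and $\cS$ gathers the purely quadratic terms together with the time-dependent perturbation proportional to $\mu e^{2t}/(1+e^{2t})$. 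The two features I would rely on are that $\cS$ is quadratic in $\bar X$ (hence has Lipschitz constant $O(\e)$ on a ball of radius $\e$) and that the genuinely non-autonomous piece decays like $e^{2t}$ as $t\to-\infty$.

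The structural input, already recorded in Section \ref{LinSPoints}, is that for $q>q^*(k,\mu-2)$ the matrix $A_0$ has two eigenvalues with negative real parts, so that $\left\|e^{tA_0}\right\|_{\cL(\RR^2,\RR^2)}\le Ce^{-\alpha t}$ for some $\alpha>0$ and all $t\ge 0$. This is exactly what lets me integrate from $-\infty$: I would look for a solution on $(-\infty,t_0)$ as a fixed point of
\[
\cF(\bar X)(t):=\int_{-\infty}^t e^{(t-\tau)A_0}\cS(\tau,\bar X(\tau))\,d\tau,
\]
because the factor $e^{(t-\tau)A_0}$ decays as $\tau\to-\infty$ and forces convergence of the integral; any fixed point then automatically satisfies $\bar X(t)\to 0$, i.e. $(x(t),y(t))\to(\hat x,\hat y)$.

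The contraction estimate is the technical heart. Working in $\XX:=C((-\infty,t_0),\RR^2)$ with the supremum norm, on the ball $B_\e$, I would bound $\left\|\cF(X_1)-\cF(X_2)\right\|$ by $\int_{-\infty}^t Ce^{-\alpha(t-\tau)}\,d\tau$ times the $O(\e)$ Lipschitz constant of $\cS$, obtaining a factor of order $C\e/\alpha$; choosing $\e$ small (equivalently $t_0$ very negative, which simultaneously makes $\mu e^{2t}/(1+e^{2t})<\e$ on the whole interval) yields a contraction constant $\le\tfrac12$. To close the self-mapping property $\cF(B_\e)\subseteq B_\e$, I would estimate $\left\|\cF(O)\right\|$, which is driven only by $\cS(t,O)=O(e^{2t})$ coming from the $\mu\hat x\,e^{2t}/(1+e^{2t})$ term; this is $o(1)$ as $t_0\to-\infty$, so $\left\|\cF(\bar X)\right\|\le\tfrac12\e+o(1)<\e$. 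The contraction mapping theorem then delivers a unique fixed point in $B_\e$, and $C^1$ regularity follows from the differential equation.

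The step I expect to be the main obstacle is the careful coordination of the two small parameters $\e$ and $t_0$: one must choose $t_0$ negative enough that the genuinely non-autonomous term is pointwise smaller than $\e$ on all of $(-\infty,t_0)$, while simultaneously keeping $\e$ small enough that the quadratic Lipschitz bound beats $\alpha$. Once the exponential estimate $\left\|e^{tA_0}\right\|\le Ce^{-\alpha t}$ is in hand, the remainder is a standard—if bookkeeping-heavy—contraction argument; the only genuinely problem-specific point is verifying the negativity of the real parts of the eigenvalues of $A_0$, which is precisely the role played by the hypothesis $q>q^*(k,\mu-2)$.
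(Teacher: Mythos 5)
Your proposal is correct and follows essentially the same route as the paper: the same translation $\bar X = (x-\hat x, y-\hat y)$, the same splitting $\bar X' = A_0\bar X + \cS(t,\bar X)$ via \eqref{perturba}, the same variation-of-constants operator $\cF(\bar X)(t)=\int_{-\infty}^t e^{(t-\tau)A_0}\cS(\tau,\bar X(\tau))\,d\tau$, and the same contraction argument on a small ball in $C((-\infty,t_0),\RR^2)$ using the negativity of the real parts of the eigenvalues of $A_0$ for $q>q^*(k,\mu-2)$. The only slight difference is at the very end: you assert that a fixed point ``automatically'' tends to $0$ as $t\to-\infty$ (which does hold, but needs a short bootstrap from the fixed-point equation), whereas the paper deduces it from the uniqueness of the fixed point together with the fact that $\e$ can be taken arbitrarily small as $t_0\to-\infty$.
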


In the following lemma we define in terms of the orbit $(x(t),y(t))$ obtained in Lemma \ref{S3L1} the value $\tilde{\lambda}$ associated with a singular solution $\tilde{u}(r)$. 

\begin{lemma}\label{S3L2}
Suppose that $q>q^*(k,\mu-2)$.
There exists $\lambda=\tilde{\lambda}>0$ such that the problem 
\begin{equation}\label{S3L2E1}
\begin{cases}
(r^{n-k}(u'(r))^k)'=r^{n-1}c_{n,k}^{-1}\lambda \frac{r^{\mu-2}}{(1+r^2)^{\frac{\mu}{2}}}(1-u(r))^q, & 0<r<1,\\
u(r)<0, & 0<r<1,\\
u(1)=0,
\end{cases}
\end{equation}
has a singular solution $\tilde{u}(r)$ that satisfies
\begin{equation}\label{S3L2E12}
\tilde{u}(r)=-\left[\frac{c_{n,k}\hat{x}\hat{y}^k}{\tilde{\lambda}}\right]^{\frac{1}{q-k}}r^{-\frac{2k-2+\mu}{q-k}}(1+o(1))\ \textrm{as}\ r\to 0.
\end{equation}
\begin{proof}
Let $(x(t),y(t))$ be given by Lemma~\ref{S3L1} and set $h(r) =\frac{r^{\mu-2}}{(1+r^2)^{\frac{\mu}{2}}}$. Since $(x(t),y(t))\to(\hat{x},\hat{y})$ as $t\to-\infty$, (\ref{inverse0}) yields
\begin{align}
u(r) & :=1+w(r)\nonumber\\
&=1-\left[c_{n,k}^{-1}\lambda r^{2k}h(r)\right]^{-\frac{1}{q-k}}\left(x(t)y(t)^k\right)^{\frac{1}{q-k}}\label{S3L2E1+}\\
&\to-\infty\ \textrm{as}\ r\to 0.\nonumber
\end{align}
Then $u(r)$ is a singular solution of \eqref{S3L2E1}.
Differentiating $w(r)$ with respect to $r$, we have
\begin{align*}
(c_{n,k}^{-1}\lambda)^{\frac{1}{q-k}} w'(r) &=\left[r^{2k}h(r)\right]^{-\frac{1}{q-k}}r^{-1}\frac{1}{q-k}(x(t)y(t)^k)^{\frac{1}{q-k}}
\left\{ 2k+\frac{rh'(r)}{h(r)}-\left(\frac{x'(t)}{x(t)}+k\frac{y'(t)}{y(t)}\right)\right\}\\
&=r^{-\frac{q+k}{q-k}}h(r)^{-\frac{1}{q-k}}\frac{1}{q-k}(x(t)y(t)^k)^{\frac{1}{q-k}}\left\{\frac{rh'(r)}{h(r)}+2-\frac{\mu}{1+e^{2t}}+(q-k)y(t)\right\}.
\end{align*}
On the other hand, $|rh'(r)/h(r)|\le C$ for small $r>0$, we have
\begin{align}
|r^{n-k}(w'(r))^k|
&\le r^{n-k}r^{-k\frac{q+k}{q-k}}Cr^{-\frac{k(\mu-2)}{q-k}}\nonumber\\
&=Cr^{\frac{q(n-2k)-k(n-2+\mu)}{q-k}}\to 0\ \textrm{as}\ r\to 0.\label{S3L2E2}
\end{align}
Now, $(MS_{q,\mu})$ has orbits on the $x$-axis and $y$-axis, the uniqueness of a solution of $(MS_{q,\mu})$ shows that $(x(t),y(t))$ is not on the $x$-axis nor on the $y$-axis. Hence $x(t)>0$ and $y(t)>0$ as long as the solution $(x(t),y(t))$ exists.
By (\ref{inverse0}) we see that
\begin{equation}\label{S3L2E3}
w(r)<0.
\end{equation}
Integrating the equation in (\ref{S3L2E1}) over $[s,r]$, we have
\[
r^{n-k}(w'(r))^k-s^{n-k}(w'(s))^k=\int_s^r\tau^{n-1}c_{n,k}^{-1}\lambda h(\tau)(-w(\tau))^qd\tau.
\]
Letting $s\to 0$, (\ref{S3L2E2}) yields
\begin{equation}\label{S3L2E4}
r^{n-k}(w'(r))^k=\int_0^r\tau^{n-1}c_{n,k}^{-1}\lambda h(\tau)(-w(\tau))^qd\tau.
\end{equation}

By simple calculation, we see that the integrand is integrable near $0$.

Next we define
\[
\bar{r}:=\sup\{\delta>0:\ \textrm{The solution $w(r)$ of (\ref{Eq:IVP:0}) exists for $0<r<\delta$}\}.
\]
We show by contradiction that $\bar{r}=\infty$.

Suppose to the contrary, i.e., $\bar{r}<\infty$.
Then, by (\ref{S3L2E3}) and (\ref{S3L2E4}), we see that $(w'(r))^k>0$.
Since $w(r)\to -\infty$ as $r\to 0$, there exists an $r_0>0$ such that $w'(r_0)>0$.
Since $w'$ is continuous and $w$ cannot be $0$, we see that
\begin{equation}\label{S3L2E5}
\textrm{$w'(r)>0$\;\; for\;\; $ 0<r<\bar{r}$.}
\end{equation}
Integrating again the equation in (\ref{S3L2E1}), but now over $[r_0,\bar{r}]$, we have
\begin{eqnarray*}
r^{n-k}(w'(r))^k&=&r_0^{n-k}(w'(r_0))^k+\int_{r_0}^r\tau^{n-1}c_{n,k}^{-1}\lambda h(\tau)(-w(\tau))^qd\tau\\
&>&r_0^{n-k}(w'(r_0))^k.
\end{eqnarray*}
Thus, for all $r_0<r<\bar{r}$, we have
\[
w'(r)>\left(\frac{r_0}{r}\right)^\frac{n-k}{k}w'(r_0)>0.
\]
A last integration gives
\[
0>w(r)>w(r_0)+\int_{r_0}^r\left(\frac{r_0}{\tau}\right)^\frac{n-k}{k}w'(r_0)\,d\tau>-\infty.
\]
Hence $\lim_{r\uparrow\bar{r}}w(r)$ exists. On the other hand, differentiating the function in \eqref{inverse0} with respect to $r$, we have
\begin{eqnarray*}
w'(r)&=&\frac{1}{q-k}\frac{w(r)}{r}\left [-2+\frac{\mu}{1+e^{2t}}-(q-k)y(t)-r\frac{h'(r)}{h(r)}\right]\\
&=&-w(r)\frac{y(\ln r)}{r}.
\end{eqnarray*}
Since $(\hat{x},\hat{y})\in G_-$ and all solutions starting in $G_-$ remain in $G_-$ by Lemma \ref{inward}, we conclude that $\lim_{r\uparrow\bar{r}}w'(r)$ exists and $w'(\bar{r})>0$. Since $w(\bar{r})<0$, $w'(\bar{r})\neq 0$, and $w$ satisfies (\ref{Eq:IVP:0}), $w(r)$ can be locally defined as the solution of (\ref{Eq:IVP:0}) in a right neighborhood of $r=\bar{r}$. This contradicts the definition of $\bar{r}$, and therefore $\bar{r}=\infty$.

Since $w(r)<0$ and $w'(r)>0$ for all $r>0$,
(\ref{newtrans0}) yields that $(x(t),y(t))$ can be defined for all $t\in\RR$.
We now define $\tilde{\lambda}:=2^{\mu/2}c_{n,k}x(0)y(0)^k$.
Then $w(1)=-1$ and $u(1)=0$.
This solution $u(r)$ of (\ref{S3L2E1}) with $\lambda=\tilde{\lambda}$ is denoted by $\tilde{u}(r)$.
Then (\ref{S3L2E12}) follows from (\ref{S3L2E1+}) and  $(\tilde{\lambda},\tilde{u}(r))$ is a desired singular solution.
\end{proof}
\end{lemma}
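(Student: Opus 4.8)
The plan is to build the singular solution directly from the orbit converging to the interior critical point $(\hat{x},\hat{y})$ supplied by Lemma~\ref{S3L1}, read off its asymptotics through the inverse transformation \eqref{inverse0}, continue the orbit forward until $r=1$, and finally fix $\tilde{\lambda}$ by a normalization. First I would take the solution $(x(t),y(t))$ of $(MS_{q,\mu})$ from Lemma~\ref{S3L1}, which satisfies $(x(t),y(t))\to(\hat{x},\hat{y})$ as $t\to-\infty$, set $r=e^t$, and define $w(r)$ via \eqref{inverse0} and $u(r):=1+w(r)$. Since $x(t)y(t)^k\to\hat{x}\hat{y}^k$ and $r^{2k}h(r)=r^{2k+\mu-2}(1+r^2)^{-\mu/2}\sim r^{2k+\mu-2}$ as $r\to 0$, the leading behaviour in \eqref{inverse0} yields the stated asymptotics \eqref{S3L2E12} at once and shows $u(r)\to-\infty$, so $u$ is genuinely singular at the origin. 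At this point I would also record the sign and monotonicity information: because the axes are invariant lines of the flow, uniqueness forces $x(t),y(t)>0$, so \eqref{inverse0} gives $w(r)<0$, while differentiating \eqref{inverse0} produces the clean identity $w'(r)=-w(r)\,y(\ln r)/r$, which makes $w'(r)>0$ as long as the orbit stays in $\{y>0\}$.

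Next I would verify that $w$ solves the equation and extract its integral form. The asymptotic estimate gives $r^{n-k}(w'(r))^k=O\!\left(r^{[q(n-2k)-k(n-2+\mu)]/(q-k)}\right)$, which tends to $0$ as $r\to 0$ since $q>q^*(k,\mu-2)$; hence integrating \eqref{Eq:IVP:0} from $0$ to $r$ is legitimate and produces the representation \eqref{S3L2E4} with an integrable source near the origin.

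The main work, and the step I expect to be the genuine obstacle, is global continuation: showing that $w$ extends to all $r>0$, and in particular past $r=1$. I would argue by contradiction, letting $\bar{r}$ be the supremum of radii on which $w$ exists and supposing $\bar{r}<\infty$. From \eqref{S3L2E4} and $w<0$ one gets $(w'(r))^k>0$, and since $w\to-\infty$ at $0$ there is a point with $w'>0$; continuity together with $w\neq 0$ then forces $w'(r)>0$ on $(0,\bar{r})$. Integrating the equation from a fixed $r_0$ yields the lower bound $w'(r)>(r_0/r)^{(n-k)/k}w'(r_0)>0$, and a further integration keeps $w(r)$ bounded below as $r\uparrow\bar{r}$, so $\lim_{r\uparrow\bar{r}}w(r)$ exists and is finite and negative. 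The crucial point is that the orbit lies in $G_-$ and, by Lemma~\ref{inward}, remains in $G_-$, which guarantees that $\lim_{r\uparrow\bar{r}}w'(r)$ exists with $w'(\bar{r})>0$. Thus $(w(\bar{r}),w'(\bar{r}))$ is a regular Cauchy datum for \eqref{Eq:IVP:0} with $w(\bar{r})<0$ and $w'(\bar{r})\neq 0$, so the solution continues to a right neighbourhood of $\bar{r}$, contradicting the definition of $\bar{r}$. Hence $\bar{r}=\infty$, and the orbit is thereby defined for all $t\in\RR$.

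Finally, with $w$ defined for every $r>0$, I would fix the parameter by the normalization $\tilde{\lambda}:=2^{\mu/2}c_{n,k}\,x(0)y(0)^k$; since the system $(MS_{q,\mu})$ and its orbit do not depend on $\lambda$, and $\lambda$ enters \eqref{inverse0} only through an explicit prefactor, evaluating \eqref{inverse0} at $r=1$, where $h(1)=2^{-\mu/2}$, then gives exactly $w(1)=-1$, i.e.\ $\tilde{u}(1)=0$. Positivity $\tilde{\lambda}>0$ is automatic from $x(0),y(0)>0$. The pair $(\tilde{\lambda},\tilde{u})$ is then the desired singular solution of \eqref{S3L2E1}, with the asymptotics \eqref{S3L2E12} already in hand.
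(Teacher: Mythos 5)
Your proposal is correct and follows essentially the same route as the paper's proof: the orbit from Lemma~\ref{S3L1} transformed back via \eqref{inverse0}, the decay estimate on $r^{n-k}(w')^k$ giving the integral identity, the continuation argument by contradiction using the identity $w'=-w\,y(\ln r)/r$ together with the $G_-$ invariance from Lemma~\ref{inward}, and the same normalization $\tilde{\lambda}=2^{\mu/2}c_{n,k}x(0)y(0)^k$ forced by $h(1)=2^{-\mu/2}$. Your explicit remark that the orbit is independent of $\lambda$, which enters \eqref{inverse0} only as a prefactor, is a useful clarification of why the normalization step is legitimate, but the argument is the paper's.
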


\section{Intersection number}\label{InterNumber}
In this section we study the intersection number between a regular and a singular solution of suitable equations. These results will be used in the next section to prove Theorem \ref{S4L4} on the multiplicity of solutions of problem $(P_{\lambda})$. 

\medbreak

Let $\tilde{\lambda}$ be as in Lemma \ref{S3L2} and consider the problem
\begin{equation}\label{S4E1}
\begin{cases}
(r^{n-k}(U')^k)'=c_{n,k}^{-1}\tilde{\lambda}r^{n+\mu-3}(-U)^q, & r>0,\\
U(0)=-1,\\
U'(0)=0.
\end{cases}
\end{equation}
Let 
\[
\tilde{U}(r):=-\left[\frac{c_{n,k}\hat{x}\hat{y}^k}{\tilde{\lambda}}\right]^{\frac{1}{q-k}}r^{-\frac{1}{\gamma}},
\]
where $\gamma:=\hat{y}^{-1}=(q-k)/(2k+\mu-2)$.  It is easy to see that $\tilde{U}(r)$ is a singular solution of the first equation in (\ref{S4E1}). Moreover, this equation is of the Emden-Fowler type, which corresponds to the system $(LVS_{q,\rho_-})$ with $h(r)=r^{\mu-2}$. In this case, 
replacing the stationary point $(x,y)=(\hat{x},\hat{y})$ and $\lambda=\tilde{\lambda}$ in \eqref{inverse0}, we obtain the function $\tilde{U}(r)$. Compare with the value $\tilde{\lambda}$ defined in \cite[Theorem 3.1]{SaVe17} and its corresponding singular solution.

The following lemma shows that the singular solution $\tilde{U}(r)$ crosses infinitely many times the regular solution of (\ref{S4E1}).
\begin{proposition}\label{S4P1}
Suppose that $q^*(k,\mu-2)<q<q_{JL}(k,\mu-2)$. Let $U(r)$ be the unique solution of (\ref{S4E1}).
Then
\[
\cZ_{(0,\infty)}[\tilde{U}(\,\cdot\,)-U(\,\cdot\,)]=\infty,
\]
where $\cZ_{I}[\varphi(\,\cdot\,)]$ denotes the number of the zeros of the function $\varphi(\cdot)$ in the interval $I\subset\mathbb{R}$, i.e., $\cZ_{I}[\varphi(\,\cdot\,)]:=\sharp\{r\in I:\ \varphi(r)=0\}$.
\end{proposition}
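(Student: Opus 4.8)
The plan is to translate everything into the phase plane of the autonomous system $(LVS_{q,\rho_-})$ and to read off the intersection number from the winding of an orbit around the interior equilibrium $(\hat x,\hat y)$. First I would apply the change of variables \eqref{newtrans0} with $h(r)=r^{\mu-2}$ to the Emden--Fowler equation \eqref{S4E1}. Since $r h'(r)/h(r)\equiv \mu-2$, one has $\rho(t)\equiv n-2+\mu$, so \eqref{S4E1} becomes the \emph{autonomous} system $(LVS_{q,\rho_-})$ with $\rho_-=n-2+\mu$. Under this transformation the explicit singular solution $\tilde U$ corresponds to the constant orbit sitting at the interior critical point $(\hat x,\hat y)$ of \eqref{interiorcritipoint:minus} (substituting $\tilde U(r)=-[c_{n,k}\hat x\hat y^k/\tilde\lambda]^{1/(q-k)}r^{-1/\gamma}$ into \eqref{newtrans0} gives $x\equiv\hat x$, $y\equiv\hat y$), while the regular solution $U$ corresponds to the orbit $(x(t),y(t))$ that emanates from $P_3(n-2+\mu,0)$ as $t\to-\infty$, exactly as in Proposition \ref{Prop:Equiv}.

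Next I would reduce the counting of zeros to a statement about this orbit. Applying the inverse formula \eqref{inverse0} to both $U$ and $\tilde U$ with the common factor $[c_{n,k}^{-1}\tilde\lambda\, r^{2k}h(r)]^{-1/(q-k)}$, one gets
\[
\tilde U(r)-U(r)=-\left[c_{n,k}^{-1}\tilde\lambda\, r^{2k}h(r)\right]^{-\frac{1}{q-k}}\left[(\hat x\hat y^k)^{\frac{1}{q-k}}-(x(t)y(t)^k)^{\frac{1}{q-k}}\right],\qquad t=\ln r .
\]
Because the prefactor never vanishes for $r>0$, the zeros of $\tilde U-U$ on $(0,\infty)$ coincide exactly with the zeros of $\phi(t):=x(t)y(t)^k-\hat x\hat y^k$, i.e.\ with the times at which the orbit meets the level curve $\{xy^k=\hat x\hat y^k\}$ through $(\hat x,\hat y)$. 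It therefore suffices to prove that $\phi$ changes sign infinitely often.

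The mechanism is that $(\hat x,\hat y)$ is a \emph{stable focus} precisely in the range $q^*(k,\mu-2)<q<q_{JL}(k,\mu-2)$. Linearizing $(LVS_{q,\rho_-})$ at $(\hat x,\hat y)$ gives the matrix $A_0$ from Lemma \ref{S3L1}, with
\[
\operatorname{tr}A_0=\hat y-\hat x,\qquad \det A_0=\hat x\hat y\,\frac{q-k}{k}>0 .
\]
The eigenvalues form a conjugate pair with negative real part exactly when $(\operatorname{tr}A_0)^2<4\det A_0$ together with $\operatorname{tr}A_0<0$; the equation $(\operatorname{tr}A_0)^2=4\det A_0$ is precisely the one whose root defines the Joseph--Lundgren type exponent $q_{JL}(k,\mu-2)$, so for $q^*(k,\mu-2)<q<q_{JL}(k,\mu-2)$ the equilibrium is a stable focus, consistent with the asymptotic stability recorded in Section \ref{LinSPoints}.

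Finally I would combine the phase portrait with the local spiralling. The orbit from $P_3$ is bounded and stays in the positively invariant region $G_-$ (Lemma \ref{inward}); since the only interior equilibrium is the focus $(\hat x,\hat y)$ and there are no periodic orbits in the quadrant, a Poincar\'e--Bendixson argument forces $(x(t),y(t))\to(\hat x,\hat y)$ as $t\to+\infty$. Writing $\xi=x-\hat x$, $\eta=y-\hat y$, the complex eigenvalues make $\arg(\xi(t)+i\eta(t))\to+\infty$, so the orbit winds around $(\hat x,\hat y)$ infinitely often with exponentially decaying amplitude. Since
\[
\phi=\hat y^k\,\xi+k\hat x\hat y^{k-1}\,\eta+O(\xi^2+\eta^2)=\nabla(xy^k)\big|_{(\hat x,\hat y)}\cdot(\xi,\eta)+O(|(\xi,\eta)|^2)
\]
and the leading linear functional is nonzero, each half-turn of the spiral produces a transversal sign change of $\phi$, the quadratic remainder being negligible near the focus; hence $\phi$ has infinitely many zeros accumulating at $t=+\infty$ (equivalently $r=+\infty$), giving $\cZ_{(0,\infty)}[\tilde U-U]=\infty$. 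I expect the main obstacle to be the two global dynamical inputs of this last step: ruling out limit cycles so that the orbit genuinely converges to the focus, and upgrading the infinite winding into genuine transversal zeros of $\phi$ by controlling the $O(|(\xi,\eta)|^2)$ term uniformly near $(\hat x,\hat y)$.
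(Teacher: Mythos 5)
Your proposal is correct in substance and follows the same core strategy as the paper: pass to the phase plane via \eqref{newtrans0} with $h(r)=r^{\mu-2}$ (so the system is the autonomous $(LVS_{q,\rho_-})$), identify $\tilde U$ with the interior equilibrium $(\hat x,\hat y)$ and $U$ with the orbit emanating from $P_3(n-2+\mu,0)$, and convert the winding of that orbit around the stable spiral point into infinitely many sign changes of $\tilde U-U$. The differences are in how the two hard points are handled, and they are exactly the two obstacles you flag. First, the paper does not run a Poincar\'e--Bendixson argument at all: the facts that $(\hat x,\hat y)$ is a stable spiral for $q^*(k,\mu-2)<q<q_{JL}(k,\mu-2)$ and that the orbit from $P_3$ rotates counterclockwise around it are simply imported from \cite{SaVe17}. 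If you want your self-contained route, note that limit cycles can be excluded with the Dulac function $B(x,y)=x^{\alpha-1}y^{\beta-1}$, $\alpha=\frac{k+1}{q-k}$, $\beta=\frac{k(q+1)}{q-k}$, which gives $\operatorname{div}(BF)=x^{\alpha-1}y^{\beta-1}\,\frac{(n-2k)\left(q^*(k,\mu-2)-q\right)}{q-k}<0$ on the open quadrant for $q>q^*(k,\mu-2)$; but you would also need to rule out convergence to the boundary saddles (the stable manifold of $P_2(0,\frac{n-2k}{k})$ enters the interior of the quadrant) and heteroclinic cycles, which your sketch does not mention. Second, the paper avoids your quadratic-remainder control entirely by a sampling trick: instead of counting crossings of the level curve $\{xy^k=\hat x\hat y^k\}$, it takes the times $t_n$ at which the spiralling orbit crosses the horizontal line $y=\hat y$, where counterclockwise rotation forces $x(t_2)<x(t_4)<\cdots<\hat x<\cdots<x(t_3)<x(t_1)$; at $r_n=e^{t_n}$ the factor $y(t_n)^k=\hat y^k$ cancels in \eqref{inverse0}, so $\tilde U(r_n)/U(r_n)=\left(\hat x/x(t_n)\right)^{1/(q-k)}$ alternates below and above $1$, and since both functions are negative, $\tilde U-U$ alternates in sign and the intermediate value theorem yields a zero between consecutive $r_n$ with no expansion of $xy^k$ needed. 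In short: your mechanism is the right one, but where you leave gaps the paper either cites prior work or sidesteps the issue with a cleaner bookkeeping device.
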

\begin{proof}
By the local analysis at the point $(\hat{x},\hat{y})$ (see \cite[Section 6]{SaVe17}), we see that this point is a stable spiral for $q^*(k,\mu-2)<q<q_{JL}(k,\mu-2)$. The orbit $(x(t),y(t))$ of $(LVS_{q,n+\mu-2})$ starts from the point $(n+\mu-2,0)$ at $t=-\infty$ and rotates around the point $(\hat{x},\hat{y})$ counterclockwise. Therefore there exists a sequence $\{t_n\}_{n=1}^{\infty}$ such that $t_1<t_2<\cdot\cdot\cdot, y(t_n)=\hat{y}$ for all $n$ and $x(t_2)<x(t_4)<\cdot\cdot\cdot <\hat{x}<\cdot\cdot\cdot <x(t_3)<x(t_1)$. Let $r_n:=e^{t_n}$. By \eqref{inverse0} with $h(r)=r^{\mu-2}$, we have
\begin{eqnarray*}
\frac{\tilde{U}(r_n)}{U(r_n)}&=&\left[\frac{\hat{x}\hat{y}^k}{x(t_n)(y(t_n))^k}\right]^{\frac{1}{q-k}}\\
&=&\left[\frac{\hat{x}}{x(t_n)}\right]^{\frac{1}{q-k}}\begin{cases}
<1, & \mbox{if}\; n\in\{1,3,\cdot\cdot\cdot\},\\
>1, & \mbox{if}\; n\in\{2,4,\cdot\cdot\cdot\},
\end{cases}
\end{eqnarray*}
 and therefore $\cZ_{(0,\infty)}[\tilde{U}(\,\cdot\,)-U(\,\cdot\,)]=\infty$.
\end{proof}

\begin{lemma}\label{S4L1}
Let $\tilde{u}(r)$ be the singular solution obtained in Lemma \ref{S3L2}. Define $\tilde{w}(r)=\tilde{u}(r)-1$ and $(\cF_{\alpha}\tilde{w})(r) =\frac{1}{\alpha}\,\tilde{w}(\frac{r}{\alpha^{\gamma}})$ for $r>0$ and $\alpha>0$.
Then
\begin{equation}\label{S4L1E0}
(\cF_{\alpha}\tilde{w})(r)\to\tilde{U}(r)\ \textrm{in}\ C_{loc}(0,\infty)\ \textrm{as}\ \alpha\to\infty.
\end{equation}

\begin{proof}
Let $I\subset(0,\infty)$ be an arbitrary compact interval. From Lemma \ref{S3L2} we see that
\[
\tilde{w}(r)=-\left[\frac{c_{n,k}\hat{x}\hat{y}^k}{\tilde{\lambda}}\right]^{\frac{1}{q-k}}r^{-\frac{1}{\gamma}}(1+\theta(r)),
\]
where $\theta(r)$ satisfies $\limsup_{r\to 0}\theta(r)=0$.
Therefore
\[
\theta\left(\frac{r}{\alpha^{\gamma}}\right)\to 0\ \textrm{in}\ C(I)\ \textrm{as}\ \alpha\to\infty.
\]
Using this convergence, we have
\begin{align*}
\frac{1}{\alpha}\,\tilde{w}\left(\frac{r}{\alpha^{\gamma}}\right)
&=-\frac{1}{\alpha}\left[\frac{c_{n,k}\hat{x}\hat{y}^k}{\tilde{\lambda}}\right]^{\frac{1}{q-k}}\left(\frac{r}{\alpha^{\gamma}}\right)^{-\frac{1}{\gamma}}\left(1+\theta\left(\frac{r}{\alpha^{\gamma}}\right)\right)\\
&=-\left[\frac{c_{n,k}\hat{x}\hat{y}^k}{\tilde{\lambda}}\right]^{\frac{1}{q-k}}r^{-\frac{1}{\gamma}}\left(1+\theta\left(\frac{r}{\alpha^{\gamma}}\right)\right)\\
&\to \tilde{U}(r)\ \textrm{in}\ C(I)\ \textrm{as}\ \alpha\to\infty.
\end{align*}
Since $I$ was chosen arbitrarily, \eqref{S4L1E0} holds.
\end{proof}
\end{lemma}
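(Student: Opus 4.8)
The plan is to exploit the exact scaling invariance built into the operator $\cF_{\alpha}$ together with the sharp asymptotics \eqref{S3L2E12} for $\tilde{u}$ near the origin. Write $C:=\left[\frac{c_{n,k}\hat{x}\hat{y}^k}{\tilde{\lambda}}\right]^{1/(q-k)}$, so that $\tilde{U}(r)=-Cr^{-1/\gamma}$. The first step is to record the behaviour of $\tilde{w}=\tilde{u}-1$ rather than of $\tilde{u}$: I would rewrite \eqref{S3L2E12} as
\[
\tilde{w}(r)=-Cr^{-1/\gamma}\bigl(1+\theta(r)\bigr),
\]
where $\theta$ collects both the $o(1)$ appearing in \eqref{S3L2E12} and the additive constant $-1$. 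The point is that the $-1$ is genuinely subleading: since $\tfrac{1}{\gamma}=\tfrac{2k-2+\mu}{q-k}>0$ under the standing hypotheses $q>k$, $\mu\ge 2$, we have $-1=\bigl(r^{1/\gamma}/C\bigr)\cdot(-Cr^{-1/\gamma})$, so it contributes a term of size $O(r^{1/\gamma})$ to $\theta$. Hence $\theta(r)\to 0$ as $r\to 0$.

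Next I would substitute the rescaled argument $r/\alpha^{\gamma}$ and use that $r^{-1/\gamma}$ is positively homogeneous of degree $-1/\gamma$. Rescaling produces a factor $(\alpha^{\gamma})^{1/\gamma}=\alpha$, which is cancelled exactly by the prefactor $1/\alpha$ in the definition of $\cF_{\alpha}$ — this is precisely why the exponent $\gamma$ and the normalization $1/\alpha$ are chosen as they are. The result is the clean identity
\[
(\cF_{\alpha}\tilde{w})(r)=-Cr^{-1/\gamma}\Bigl(1+\theta\bigl(r/\alpha^{\gamma}\bigr)\Bigr)=\tilde{U}(r)-Cr^{-1/\gamma}\,\theta\bigl(r/\alpha^{\gamma}\bigr),
\]
so that the leading profiles coincide identically and all the $\alpha$-dependence is confined to the error term.

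Finally I would establish the convergence on an arbitrary compact interval $I=[a,b]\subset(0,\infty)$. Because $\gamma>0$ we have $\alpha^{\gamma}\to\infty$, so $r/\alpha^{\gamma}\to 0$ uniformly for $r\in I$; combined with $\theta(s)\to 0$ as $s\to 0$ this yields $\sup_{r\in I}\bigl|\theta(r/\alpha^{\gamma})\bigr|\to 0$. Since $r^{-1/\gamma}$ is bounded on $I$, the displayed difference $(\cF_{\alpha}\tilde{w})-\tilde{U}=-Cr^{-1/\gamma}\theta(r/\alpha^{\gamma})$ tends to $0$ uniformly on $I$, which is exactly convergence in $C(I)$. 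As $I$ was arbitrary, \eqref{S4L1E0} follows.

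The only genuinely delicate point is the first step: verifying that the additive $-1$ in $\tilde{w}$ is subsumed into a bona fide $o(1)$ error, so that \eqref{S3L2E12} controls $\tilde{w}$ and not merely $\tilde{u}$. This is where the strict positivity of $1/\gamma$ is used, and it is the only place where the structural hypotheses enter. Everything after that is homogeneity bookkeeping and a uniform-continuity estimate on compacta, both of which are routine.
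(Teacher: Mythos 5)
Your proposal is correct and follows essentially the same route as the paper: rewrite $\tilde{w}(r)=-Cr^{-1/\gamma}(1+\theta(r))$ with $\theta(r)\to 0$ as $r\to 0$, use the exact cancellation $\frac{1}{\alpha}\left(\frac{r}{\alpha^{\gamma}}\right)^{-1/\gamma}=r^{-1/\gamma}$, and conclude by uniform convergence of $\theta(r/\alpha^{\gamma})$ on compact subsets of $(0,\infty)$. The only difference is that you explicitly justify absorbing the additive $-1$ (from $\tilde{w}=\tilde{u}-1$) into the $o(1)$ error using $1/\gamma>0$, a point the paper's proof leaves implicit when it reads the asymptotics of Lemma \ref{S3L2} directly as a statement about $\tilde{w}$; this is a genuine (if minor) improvement in care, not a different argument.
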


\begin{lemma}\label{S4L2}
Let $w(r,\alpha)$ be the solution of the problem
\begin{equation}\label{S4L2E-1}
\begin{cases}
(r^{n-k}(w')^k)'=r^{n-1}c_{n,k}^{-1}\tilde{\lambda}\frac{r^{\mu-2}}{(1+r^2)^{\mu/2}}(-w)^q, & r>0,\\
w(0,\alpha)=-\alpha,\\
w_r(0,\alpha)=0.
\end{cases}
\end{equation}
Let $(\cF_{\alpha}w)(r,\alpha):=\frac{1}{\alpha}\,w(\frac{r}{\alpha^{\gamma}},\alpha)$.
Then
\begin{equation}\label{S4L2E0}
(\cF_{\alpha}w)(r,\alpha)\to U(r)\ \textrm{in}\ C_{loc}[0,\infty)\ \textrm{as}\ \alpha\to\infty.
\end{equation}
\begin{proof}
Let $I\subset [0,\infty)$ be an arbitrary compact interval including $0$.
Let $\bar{w}(r,\alpha):=(\cF_{\alpha}w)(r,\alpha)$.
Then $\bar{w}(r,\alpha)$ satisfies
\[
\begin{cases}
(r^{n-k}(\bar{w}')^k)'=r^{n-1}c_{n,k}^{-1}\tilde{\lambda}\frac{r^{\mu-2}}{(1+\alpha^{-2\gamma}r^2)^{\mu/2}}(-\bar{w})^q, & r>0,\\
\bar{w}(0,\alpha)=-1,\\
\bar{w}_r(0,\alpha)=0.
\end{cases}
\]
Since $-\alpha\le w(r,\alpha)\le 0$ for $r\ge 0$, we see that $-1\le\bar{w}(r,\alpha)\le 0$ for $r\ge 0$.
In particular, $\bar{w}$ is uniformly bounded in $I$.
Integrating the first equation above over $[0,r]$, we have
\begin{equation}\label{S4L2E1}
r^{n-k}(\bar{w}')^k=\int_0^rc_{n,k}^{-1}\tilde{\lambda}\frac{s^{n+\mu-3}}{(1+\alpha^{-2\gamma}s^2)^{\mu/2}}(-\bar{w})^qds.
\end{equation}
Then
\begin{align*}
|\bar{w}_r(r,\alpha)|
&\le\left(r^{-n+k}\int_0^rc_{n,k}^{-1}\tilde{\lambda}s^{n+\mu-3}ds\right)^{1/k}\\
&\le\left(\frac{c_{n,k}^{-1}\tilde{\lambda}}{n+\mu-2}\right)^{1/k}r^{\frac{k+\mu-2}{k}}.
\end{align*}
Here $\bar{w}(r,\alpha)$ is equicontinuous in $I$.
By the Ascoli-Arzel\`a theorem, there exist a sequence $\{\alpha_j\}$ diverging to $+\infty$ and $\bar{w}^*(r)\in C(I)$ such that
\begin{equation}\label{S4L2E2}
\bar{w}(r,\alpha_j)\to\bar{w}^*(r)\ \textrm{in}\ C(I)\ \textrm{as}\ j\to\infty.
\end{equation}
By (\ref{S4L2E1}),
\[
\bar{w}(r,\alpha_j)=-1+\int_0^r\left( t^{-n+k}\int_0^tc_{n,k}^{-1}\tilde{\lambda}\frac{s^{n+\mu-3}}{(1+\alpha_j^{-2\gamma}s^2)^{\mu/2}}(-\bar{w}(s,\alpha_j))^qds\right)^{1/k}dt.
\]
Letting $j\to\infty$, we have
\begin{equation}\label{S4L2E3}
\bar{w}^*(r)=-1+\int_0^r\left(t^{-n+k}\int_0^tc_{n,k}^{-1}\tilde{\lambda}s^{n+\mu-3}(-\bar{w}^*(s))^qds\right)^{1/k}dt\;\; \textrm{for}\ r\in I,
\end{equation}
since the following two convergences are uniform on $I$:
\[
c_{n,k}^{-1}\tilde{\lambda}\frac{s^{n+\mu-3}}{(1+\alpha_j^{-2\gamma}s^2)^{\mu/2}}(-\bar{w}(s,\alpha_j))^q\to c_{n,k}^{-1}\tilde{\lambda}s^{n+\mu-3}(-\bar{w}^*(s))^q\ \textrm{and}
\]
\[
\left(t^{-n+k}\int_0^tc_{n,k}^{-1}\tilde{\lambda}\frac{s^{n+\mu-3}}{(1+\alpha_j^{-2\gamma}s^2)^{\mu/2}}(-\bar{w}(s,\alpha_j))^qds\right)^{1/k} \to
\left(t^{-n+k}\int_0^tc_{n,k}^{-1}\tilde{\lambda}s^{n+\mu-3}(-\bar{w}^*(s))^qds\right)^{1/k}.
\]
The equality (\ref{S4L2E3}) indicates both that $\bar{w}^*(r)\in C^2(I^{i})\cap C^1(I)$ and $\bar{w}^*(r)$ is the solution of the problem
\[
\begin{cases}
(r^{n-k}((\bar{w}^*)')^k)'=c_{n,k}^{-1}\tilde{\lambda}r^{n+\mu-3}(-\bar{w}^*)^q, & r\in I,\\
\bar{w}^*(0,\alpha)=-1,\\
\bar{w}_r^*(0,\alpha)=0,
\end{cases}
\]
where $I^{i}$ denotes the set of the interior points of $I$.
Therefore $\bar{w}^*(r)=U(r)$ for $r\in I$.
Since $I$ can be chosen arbitrarily, (\ref{S4L2E2}) implies (\ref{S4L2E0}).
\end{proof}
\end{lemma}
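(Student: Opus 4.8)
The plan is to show that the rescaled family $\bar w(\,\cdot\,,\alpha):=(\cF_\alpha w)(\,\cdot\,,\alpha)$ is precompact in $C_{loc}[0,\infty)$ and that every subsequential limit coincides with the unique solution $U$ of the Emden--Fowler problem \eqref{S4E1}; uniqueness of the limit then upgrades the convergence along subsequences to convergence of the whole family, which is exactly \eqref{S4L2E0}.

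First I would compute the equation satisfied by $\bar w$. Writing $w(r,\alpha)=\alpha\,\bar w(\alpha^{\gamma}r,\alpha)$ in \eqref{S4L2E-1} and using the precise value $\gamma=(q-k)/(2k+\mu-2)$ — which is exactly the exponent rendering the leading Emden--Fowler weight $r^{\mu-2}$ scale invariant — one finds that $\bar w$ solves the same type of equation but with $\frac{r^{\mu-2}}{(1+r^2)^{\mu/2}}$ replaced by $\frac{r^{\mu-2}}{(1+\alpha^{-2\gamma}r^2)^{\mu/2}}$, subject to the normalized data $\bar w(0,\alpha)=-1$, $\bar w_r(0,\alpha)=0$. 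The whole point of the rescaling is that $\alpha^{-2\gamma}r^2\to 0$ locally uniformly, so this weight converges in $C_{loc}[0,\infty)$ to $r^{\mu-2}$, which is precisely the weight in \eqref{S4E1}.

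Next I would establish compactness. From $-\alpha\le w(r,\alpha)\le 0$ I immediately get the uniform bound $-1\le\bar w(r,\alpha)\le 0$. Integrating the first-order form of the rescaled equation and using this bound together with $n+\mu-2>0$ yields an estimate of the shape $|\bar w_r(r,\alpha)|\le C\,r^{(k+\mu-2)/k}$ on any compact $I\ni 0$, so the family is uniformly bounded and equicontinuous there. By Arzel\`a--Ascoli I extract a sequence $\alpha_j\to\infty$ with $\bar w(\,\cdot\,,\alpha_j)\to\bar w^*$ in $C(I)$. The decisive step is to pass to the limit in the integral identity
\[
\bar w(r,\alpha_j)=-1+\int_0^r\left(t^{k-n}\int_0^t c_{n,k}^{-1}\tilde\lambda\frac{s^{n+\mu-3}}{(1+\alpha_j^{-2\gamma}s^2)^{\mu/2}}(-\bar w(s,\alpha_j))^q\,ds\right)^{1/k}dt.
\]
Here the two ingredients — uniform convergence of the weight to $s^{n+\mu-3}$ and the uniform convergence $\bar w(\,\cdot\,,\alpha_j)\to\bar w^*$ — force the full integrand, and hence its $k$-th root (using continuity of $t\mapsto t^{1/k}$ on $[0,\infty)$), to converge uniformly on $I$. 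This identifies $\bar w^*$ as the integral, and therefore the classical $C^2(0,\infty)\cap C^1[0,\infty)$, solution of \eqref{S4E1}, so $\bar w^*=U$ by uniqueness of that initial value problem.

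The main obstacle I anticipate is precisely the limit passage in the nonlinear integral operator uniformly up to the origin $r=0$: I must verify that the inner integral stays controlled near $t=0$ (which follows from $n+\mu-2>0$ and the uniform boundedness of $\bar w$) and that taking the outer $1/k$-th power preserves uniform convergence rather than destroying it. Once a single subsequential limit is pinned down as $U$, the argument applies verbatim to every subsequence, so Arzel\`a--Ascoli together with uniqueness yields convergence of the entire family, establishing \eqref{S4L2E0}.
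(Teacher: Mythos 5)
Your proposal is correct and follows essentially the same route as the paper: the same rescaling with the weight $\frac{r^{\mu-2}}{(1+\alpha^{-2\gamma}r^2)^{\mu/2}}$, the uniform bound $-1\le\bar w\le 0$, the derivative estimate giving equicontinuity, Arzel\`a--Ascoli, passage to the limit in the integral identity, and identification of the limit with $U$ by uniqueness. Your explicit remark that uniqueness of the subsequential limit upgrades convergence along subsequences to convergence of the whole family is a welcome clarification of the paper's terser final step, but it is the same argument.
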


\begin{lemma}\label{S4L3}
Suppose that $q^*(k,\mu-2)<q<q_{JL}(k,\mu-2)$.
Then
\begin{equation}\label{S4L3E0}
\cZ_{[0,1]}[\tilde{w}(\,\cdot\,)-w(\,\cdot\,,\alpha)]\to\infty\ \textrm{as}\ \alpha\to\infty.
\end{equation}
\begin{proof}
Since $q^*(k,\mu-2)<q<q_{JL}(k,\mu-2)$, Proposition~\ref{S4P1} states that
\begin{equation}\label{S4L3E1}
\cZ_{[0,\infty)}[\tilde{U}(\,\cdot\,)-U(\,\cdot\,)]=\infty.
\end{equation}
Let $U_1$, $U_2$ be solutions of the equation in (\ref{S4E1}).
We have
\begin{multline}\label{S4L3E2}
kr^{n-k}U_2'^{k-1}(U_2-U_1)''+\left\{kr^{n-k}U_1'V_1+(n-k)r^{n-k-1}V_2\right\}(U_2-U_1)'\\
=r^{n+\mu-3}c_{n,k}^{-1}\tilde{\lambda}V_3(U_2-U_1),
\end{multline}
where $V_1$, $V_2$, and $V_3$ are continuous function of $r$.
We set $U_1:=U$ and $U_2:=\tilde{U}$.
Since $U_2'\neq 0$ for $r>0$, the ODE (\ref{S4L3E2}) is of second order.
By the uniqueness of the solution of ODEs, each zero of $\tilde{U}(\,\cdot\,)-U(\,\cdot\,)$ is simple.
The zero set of $\tilde{U}(\,\cdot\,)-U(\,\cdot\,)$ does not have an accumulation point, and hence each zero is isolated.
Because of this fact and (\ref{S4L3E1}), for every large $N>0$, there exists an $R>0$ such that $N+1\le\cZ_{[0,R]}[\tilde{U}(\,\cdot\,)-U(\,\cdot\,)]<\infty$.
By Lemmas~\ref{S4L1} and \ref{S4L2}
\[
\cF_{\alpha}\tilde{w}\to\tilde{U}\ \ \textrm{in}\ \ C_{loc}(0,\infty),
\]
\[
\cF_{\alpha}w\to U\ \ \textrm{in}\ \ C_{loc}(0,\infty),
\]
as $\alpha\to\infty$. Therefore, for each zero of $\tilde{U}-U$, there exists at least one zero of $\cF_{\alpha}\tilde{w}-\cF_{\alpha}w$ in a neighborhood of the zero of $\tilde{U}-U$ when $\alpha$ is large.
We conclude that
\[
\cZ_{[0,R]}[(\cF_{\alpha}\tilde{w})(\,\cdot\,)-(\cF_{\alpha}w)(\,\cdot\,,\alpha)]\ge\cZ_{[0,R]}[\tilde{U}(\,\cdot\,)-U(\,\cdot\,)]-1\ge N,
\]
provided that $\alpha$ is large.
Since $\cZ_{[0,R]}[(\cF_{\alpha}\tilde{w})(\,\cdot\,)-(\cF_{\alpha}w)(\,\cdot\,,\alpha)]=\cZ_{[0,\alpha^{-\gamma}R]}[\tilde{w}(\,\cdot\,)-w(\,\cdot\,,\alpha)]$,
we have
\[
\cZ_{[0,\alpha^{-\gamma}R]}[\tilde{w}(\,\cdot\,)-w(\,\cdot\,,\alpha)]\ge N.
\]
When $\alpha>0$ is large, we have $[0,\alpha^{-\gamma}R]\subset[0,1]$, whence
\[
\cZ_{[0,1]}[\tilde{w}(\,\cdot\,)-w(\,\cdot\,,\alpha)]\ge\cZ_{[0,\alpha^{-\gamma}R]}[\tilde{w}(\,\cdot\,)-w(\,\cdot\,,\alpha)]\ge N
\]
for large $\alpha>0$.
The number $N$ can be chosen arbitrarily large, whence (\ref{S4L3E0}) holds.
\end{proof}
\end{lemma}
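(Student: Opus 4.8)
The plan is to transport the infinitely many zeros of $\tilde U - U$ that Proposition~\ref{S4P1} provides, first onto the rescaled functions $\cF_\alpha\tilde w$ and $\cF_\alpha w$ via the $C_{loc}$-convergences of Lemmas~\ref{S4L1} and \ref{S4L2}, and then back to the unscaled difference $\tilde w - w(\cdot,\alpha)$ by undoing the scaling in $\cF_\alpha$. First I would fix an arbitrarily large $N$ and argue that $\tilde U - U$ has at least $N+1$ zeros in some bounded interval $[0,R]$. Proposition~\ref{S4P1} only gives infinitely many zeros in $(0,\infty)$, so the work is to confine finitely many of them to a compact set. The crossing times $t_n$ of the spiralling orbit satisfy $r_n=e^{t_n}$, and since near $r=0$ the orbit is close to $P_3(n-2+\mu,0)$ the ratio $\tilde U/U$ stays bounded away from $1$ there; hence the zeros do not accumulate at $0$ and the first $N+1$ of them lie in a compact subinterval of $(0,\infty)$, captured by taking $R$ large.

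The essential point for persistence is that each zero of $\tilde U - U$ is \emph{simple}. Writing $U_1:=U$, $U_2:=\tilde U$ and subtracting the two copies of the Emden--Fowler equation in \eqref{S4E1}, the difference $U_2-U_1$ satisfies a linear second-order ODE whose leading coefficient $kr^{n-k}(U_2')^{k-1}$ is nonzero for $r>0$, since $\tilde U'\neq 0$ there. By uniqueness for linear ODEs, a zero of $U_2-U_1$ at which the derivative also vanished would force $U_2\equiv U_1$, which is false; hence every zero is simple, isolated, and a transversal sign change. This both guarantees that only finitely many zeros sit in $[0,R]$ and makes the zeros robust under $C^0$-perturbation.

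Next I would pass to the rescaled functions. By Lemmas~\ref{S4L1} and \ref{S4L2}, $\cF_\alpha\tilde w\to\tilde U$ and $\cF_\alpha w\to U$ uniformly on compact subintervals of $(0,\infty)$, in particular on a compact set containing the $N+1$ transversal zeros. Because each such zero is a genuine sign change of $\tilde U-U$, uniform convergence forces $\cF_\alpha\tilde w-\cF_\alpha w$ to change sign near each of them for all sufficiently large $\alpha$, so this difference has at least $N$ zeros in $[0,R]$ (allowing one to be lost at the edge of the neighborhoods). Finally I would undo the scaling: from $(\cF_\alpha\tilde w)(r)=\frac{1}{\alpha}\tilde w(r/\alpha^\gamma)$ and $(\cF_\alpha w)(r,\alpha)=\frac{1}{\alpha}w(r/\alpha^\gamma,\alpha)$ the substitution $r\mapsto r/\alpha^\gamma$ yields $\cZ_{[0,R]}[\cF_\alpha\tilde w-\cF_\alpha w]=\cZ_{[0,\alpha^{-\gamma}R]}[\tilde w-w(\cdot,\alpha)]$, the common factor $1/\alpha$ being irrelevant to the zero set. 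Since $\gamma=(q-k)/(2k+\mu-2)>0$, for large $\alpha$ we have $\alpha^{-\gamma}R\le 1$, so $[0,\alpha^{-\gamma}R]\subset[0,1]$ and therefore $\cZ_{[0,1]}[\tilde w-w(\cdot,\alpha)]\ge N$; as $N$ is arbitrary, \eqref{S4L3E0} follows.

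The main obstacle is the transversality step: mere $C^0_{loc}$ convergence does not in general preserve zeros, so one genuinely needs the simplicity of the limiting zeros to guarantee their persistence, and this is precisely what the second-order linear ODE for $U_2-U_1$ delivers. Once simplicity is in hand, the remaining steps—localizing finitely many zeros in $[0,R]$ and tracking the scaling factor $\alpha^{-\gamma}$—are routine.
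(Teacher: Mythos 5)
Your proposal is correct and follows essentially the same route as the paper: zeros of $\tilde{U}-U$ from Proposition~\ref{S4P1} are shown to be simple via the linear second-order ODE for $U_2-U_1$ (with nonvanishing leading coefficient $kr^{n-k}(U_2')^{k-1}$), then transported to $\cF_\alpha\tilde{w}-\cF_\alpha w$ by the $C_{loc}$-convergences of Lemmas~\ref{S4L1} and \ref{S4L2}, and finally rescaled into $[0,1]$ using $\cZ_{[0,R]}[\cF_\alpha\tilde{w}-\cF_\alpha w]=\cZ_{[0,\alpha^{-\gamma}R]}[\tilde{w}-w(\,\cdot\,,\alpha)]$. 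Your extra remark ruling out accumulation of zeros at $r=0$ (which the paper leaves implicit, and which also follows simply from $\tilde{U}(r)\to-\infty$ while $U(r)\to -1$ as $r\to 0$) is a harmless refinement, not a different method.
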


\section{Proof of Theorem \ref{S4L4}}
\begin{proof}
Let $w(r,\alpha)$ be the solution of (\ref{S4L2E-1}).
Then $\bar{w}(r,\alpha):=({\tilde{\lambda}}/{\lambda})^{1/(q-k)}w(r,\alpha)$ satisfies
\[
\begin{cases}
(r^{n-k}(\bar{w}')^k)'=r^{n-1}c_{n,k}^{-1}\lambda\frac{r^{\mu-2}}{(1+r^2)^{\mu/2}}(-\bar{w})^q, & r>0,\\
\bar{w}(0,\alpha)=-\left(\frac{\tilde{\lambda}}{\lambda}\right)^{1/(q-k)}\alpha,\\
\bar{w}_r(0,\alpha)=0.
\end{cases}
\]
Further, $-({\tilde{\lambda}}/{\lambda})^{1/(q-k)}\alpha\le\bar{w}(r,\alpha)<0$ for $r\ge 0$, and $\bar{w}_r(0,\alpha)>0$ for $r>0$.
Let $u(r,\alpha)=1+\bar{w}(r,\alpha)$.
Then $u$ satisfies the equation in $(P_{\lambda})$.
If $\alpha>(\lambda/\tilde{\lambda})^{1/(q-k)}$, then $u(0,\alpha)<0$.
Since $u$ is increasing, $u$ is a solution of $(P_{\lambda})$ if and only if $u(1,\alpha)=0$.
This equation is equivalent to
\begin{equation}\label{S4L4E1}
\lambda=\tilde{\lambda}(-w(1,\alpha))^{q-k}.
\end{equation}
Since $0=\tilde{u}(1)=1+\tilde{w}(1)$, we have $\tilde{w}(1)=-1$.
We now study $\cZ_{[0,1]}[\tilde{w}(\,\cdot\,)-w(\,\cdot\,,\alpha)]$, which we call the intersection number.
By Lemma~\ref{S4L3},
\begin{equation}\label{S4L4E2}
\cZ_{[0,1]}[\tilde{w}(\,\cdot\,)-w(\,\cdot\,,\alpha)]\to\infty\ \textrm{as}\ \alpha\to\infty.
\end{equation}
Using the same argument used in the proof of Lemma~\ref{S4L3}, we can easily show that each zero of $\tilde{w}(\,\cdot\,)-w(\,\cdot\,,\alpha)$ in $(0,\infty)$ is simple.
Let $\alpha>0$ be fixed.
Since $\tilde{w}(0)-w(0,\alpha)=\infty$, the zero set is uniformly away from the origin.
The coefficient of the second derivative of the ODE which $\tilde{w}-w$ satisfies is uniformly away from zero on a compact interval in $(0,\infty)$.
Since the zero set does not have an accumulation point, we have $\cZ_{[0,1]}[\tilde{w}(\,\cdot\,)-w(\,\cdot\,,\alpha)]<\infty$. Now
each zero depends continuously on $\alpha$. Therefore the intersection number on $[0,1]$ is preserved if a zero does not go out from the boundary of $[0,1]$ and if another zero does not come from the boundary.
Since $\tilde{w}(0)-w(0,\alpha)=\infty$, a zero cannot go out or come from $r=0$.
By (\ref{S4L4E2}) we see that a zero comes from $r=1$ infinitely many times.
Therefore, there exists a sequence $\{\alpha_n\}_{n=1}^{\infty}$ such that $\alpha_1<\alpha_2<\cdots<\alpha_n<\cdots\to\infty$, $\tilde{w}(1)-w(1,\alpha_n)=0$, and $\cZ_{[0,1]}[\tilde{w}(\,\cdot\,)-w(\,\cdot\,,\alpha)]=n$.
Since $\tilde{w}(1)=-1$, we have $w(1,\alpha_n)=-1$ for every $n\ge 1$.
Now, if $w(1,\alpha)\neq -1$ and $\cZ_{[0,1]}[\tilde{w}(\,\cdot\,)-w(\,\cdot\,,\alpha)]$ is odd, then $w(1,\alpha)<-1$.
On the other hand, if $w(1,\alpha)\neq -1$ and $\cZ_{[0,1]}[\tilde{w}(\,\cdot\,)-w(\,\cdot\,,\alpha)]$ is even, then $w(1,\alpha)>-1$.
Since $w(1,\alpha)$ is continuous in $\alpha$, $w(1,\alpha)$ oscillates around $-1$ infinitely many times as $\alpha\to\infty$.
If $\lambda=\tilde{\lambda}$, then (\ref{S4L4E1}) has infinitely many solutions.
For each $N\ge 1$, there exists an $\e>0$ such that if $|\lambda-\tilde{\lambda}|<\e$, whence (\ref{S4L4E1}) has at least $N$ solutions.
Thus, the conclusion holds.
\end{proof}

\bibliographystyle{plain}
\bibliographystyle{apalike}
\bibliography{kHessianMatukuma}

$\mbox{}$

\noindent {\footnotesize{\bf Yasuhito Miyamoto}, Graduate School of Mathematical Sciences,
The University of Tokyo\\
3-8-1 Komaba,
Meguro-ku,
Tokyo 153-8914,
Japan, Email: miyamoto@ms.u-tokyo.ac.jp}

$\mbox{}$

\noindent {\footnotesize{\bf Justino S\'anchez}, Departamento de Matem\'{a}ticas, Universidad de La Serena\\
 Avenida Cisternas 1200, La Serena, Chile, Email: jsanchez@userena.cl}

$\mbox{}$

\noindent {\footnotesize{\bf Vicente Vergara}*, Departamento de Matem\'{a}ticas, Universidad Cat\'{o}lica del Norte\\
Angamos 0610,
Antofagasta, Chile, Email, vicente.vergara@ucn.cl}

\end{document}